\documentclass{article}
\usepackage[a4paper]{geometry} 
\usepackage[utf8]{inputenc} 
\usepackage[T1]{fontenc}

\usepackage{amsthm}
\usepackage{amsmath}
\usepackage{amssymb}
\usepackage{mathtools} 
\usepackage{dsfont}
\usepackage{algorithm}
\usepackage{algorithmic}
\usepackage{siunitx}

\usepackage{indentfirst}
\usepackage{comment}
\usepackage{mleftright} 
\usepackage{bropd} 
\usepackage{xspace}
\usepackage[english]{babel} 
\usepackage{hyperref}

\usepackage{graphicx} 
\usepackage{subcaption}

\usepackage[normalem]{ulem} 
\usepackage[
    backend=bibtex,
    style=authoryear,
    natbib=true,
    date=year,
    url=false,
    doi=false,
    eprint=true,
    isbn=false,
    useprefix=true,
    maxbibnames=10,
    maxcitenames=2
]{biblatex}
\addbibresource{biblio.bib}

\usepackage{authblk} 
\newtheorem{lemma}{Lemma}
\newtheorem{thm}{Theorem}
\newtheorem{prop}{Proposition}

\newtheorem{rk}{Remark}


\NewDocumentCommand{\Esp}{}{\mathbb{E}}
\NewDocumentCommand{\Prob}{}{\mathbb{P}}
\NewDocumentCommand{\Var}{}{\mathrm{Var}}
\NewDocumentCommand{\Cov}{}{\mathrm{Cov}}
\NewDocumentCommand{\A}{}{\mathcal{A}}
\NewDocumentCommand{\B}{}{\mathcal{B}}
\NewDocumentCommand{\C}{}{\mathcal{C}}
\NewDocumentCommand{\I}{}{\mathcal{I}}
\NewDocumentCommand{\J}{}{\mathcal{J}}
\NewDocumentCommand{\M}{}{\mathcal{M}}
\NewDocumentCommand{\N}{}{\mathcal{N}}
\NewDocumentCommand{\R}{}{\mathcal{R}}
\NewDocumentCommand{\Rbb}{}{\mathbb{R}}
\NewDocumentCommand{\tauH}{}{\tau_{H}}
\NewDocumentCommand{\indep}{}{\perp \!\!\! \perp}

\NewDocumentCommand{\Fn}{o}{\IfNoValueTF{#1}{\hat{F}_n}{\hat{F}_{n_#1}}}
\NewDocumentCommand{\Sn}{o}{\IfNoValueTF{#1}{\hat{S}_n}{\hat{S}_{n_#1}}}
\NewDocumentCommand{\Gn}{o}{\IfNoValueTF{#1}{\hat{G}_n}{\hat{G}_{n_#1}}}

\NewDocumentCommand{\w}{}{\omega}
\NewDocumentCommand{\wi}{}{\omega_i}

\NewDocumentCommand{\whi}{}{\hat{\omega}_i}
\NewDocumentCommand{\whj}{}{\hat{\omega}_j}

\NewDocumentCommand{\Dn}{o}{\IfNoValueTF{#1}{D_n}{D_{n_#1}}}
\NewDocumentCommand{\Dm}{}{D_m}
\NewDocumentCommand{\indicator}{m}{\mathds{1}\{#1\}}
\NewDocumentCommand{\muT}{}{\mu_{\tau}}
\NewDocumentCommand{\muTtilde}{}{\tilde{\mu}_{\tau}}
\NewDocumentCommand{\muThat}{o}{\IfNoValueTF{#1}{\hat{\mu}_{\tau,n}}{\hat{\mu}_{\tau,n_#1}}}

\NewDocumentCommand{\Rnii}{}{\hat{\R}_{n_2}^*}
\NewDocumentCommand{\RniiG}{}{\hat{\R}_{n_2}^{\hat{G}}}
\NewDocumentCommand{\qhat}{}{\hat{q}_{n_2}}
\NewDocumentCommand{\Csplit}{}{\C^{\text{split}}_n}
\NewDocumentCommand{\Croo}{}{\C^{\text{roo}}_n}
\NewDocumentCommand{\intZ}{}{\int_{\Rbb^d}}
\NewDocumentCommand{\intT}{}{\int_0^{\tau}}


\title{A Comprehensive Framework for Evaluating Time to Event Predictions using the Restricted Mean Survival Time}

\author[*]{Ariane Cwiling}
\author[*]{Vittorio Perduca}
\author[*]{Olivier Bouaziz}
\affil[*]{Université Paris Cité, CNRS, MAP5, F-75006 Paris, France}
\date{}

\begin{document}

\maketitle

{\bf Abstract.} The restricted mean survival time (RMST) is a widely used quantity in survival analysis due to its straightforward interpretation. For instance, predicting the time to event based on patient attributes is of great interest when analyzing medical data. In this paper, we propose a novel framework for evaluating RMST estimations. A criterion that estimates the mean squared error of an RMST estimator using Inverse Probability Censoring Weighting (IPCW) is presented. A model-agnostic conformal algorithm adapted to right-censored data is also introduced to compute prediction intervals and to evaluate local variable importance. Finally, a model-agnostic statistical test is developed to assess global variable importance. Our framework is valid for any RMST estimator that is asymptotically convergent and works under model misspecification.
\smallskip

{\bf Keywords.} RMST, prediction, IPCW, Brier score, conformal, prediction intervals, variable importance.

\bigskip


\section{Introduction}

In survival analysis, in the context of right-censored data, it is common to model the effect of covariates on the hazard rate, using the ubiquitous Cox model~\citep[see][]{cox_partial_1975}. This model is interpreted in terms of hazard ratios and is widely used to analyze incomplete data in medical applications. However, it relies on the strong assumption of proportional hazard (PH). As a result, approaches that focus on other estimands, such as the restricted mean survival time (RMST), have been proposed. This quantity represents the expected duration of the minimum between the occurrence of an event of interest and a predefined time horizon. It is clinically meaningful (as an expected time) and has gained considerable attention in recent years due to its simple interpretation. While initial works on this topic still relied on the PH assumption~\citep[see][]{karrison_restricted_1987,zucker_restricted_1998}, new approaches have been developed to directly model the RMST without making any assumptions~\citep[see][]{andersen_regression_2004, tian_predicting_2014, wang_modeling_2018}.

Also, in time to event analysis, it might be of interest to predict the future occurrence of the event of interest using an estimation of the RMST. This is the case for instance, when clinicians aim at predicting the time to relapse, cancer occurrence or death of a patient. In recent years, new methods have been developed in this context~\citep[see][]{zhao_deep_2021} and there is thus a need for evaluating the performance of those learning models. This is usually a challenge in the presence of right-censored data because the censored times are not observed and it is therefore difficult to assess the performance of the learning model for those times. To address this challenge, the C-index~\citep[see][]{heagerty_survival_2005} has emerged as a widely used metric, particularly as it has been adapted to censored data~\citep[see][]{gerds_estimating_2013}. However, it has been shown not to be a proper scoring function~\citep[see][]{blanche_c-index_2019}. In contrast, the time-dependent area under the ROC curve is a proper score, but it is also based on the evaluation of the rank preservation of the predictions. When it comes to quantitative measures, the mean squared error (MSE) is a proper one, but it is not readily available due to censoring. 

Another important topic in this context concerns the construction of prediction intervals, which evaluate the degree of confidence in the prediction by taking into account the individual variability. The conformal approach originally proposed by \citet{vovk_algorithmic_2005} and later expanded and popularized by \citet{lei_distribution-free_2018}, offers a way to build prediction intervals with guaranteed coverage. Although this approach has been adapted to right-censored data, it is still subject to significant constraints.
\citet{bostrom_predicting_2019}, \citet{chen_deep_2020}, \citet{teng_t-sci_2021} proposed model-specific conformal inference algorithms for the Random Survival Forest (RSF), DeepHit, and Cox-MLP, respectively. \citet{candes_conformalized_2023} proposed a model-agnostic algorithm to build a prediction lower bound for right-censored data, but with no upper bound and only for censoring of type I.

Finally, being able to interpret the output from a learning model is crucial, especially when using black-box models. To that end, one possibility is to determine the variables' importance by using permutations as developed in \citet{breiman_random_2001} in the context of random forests. This technique is widely used and has been extended to right-censored data, for instance in~\cite{ishwaran_random_2008} and~\cite{zhao_deep_2021}. Recently, model-agnostic importance measures such as LIME and SHAP have been adapted to the estimation of the survival function \citep[see][]{kovalev_survlime_2020,krzyzinski_survshapt_2023} but no extensions for the RMST have been developed yet. Although the leave-one-covariate-out (LOCO) conformal approach introduced by \citet{lei_distribution-free_2018} provides an alternative method for exploring variable importance, it has not yet been extended to right-censored data.

In this work, our goal is to propose a new framework for evaluating time to event predictions, relying on Inverse Probability Censoring Weighting (IPCW) to take into account right-censoring. First, we derived a measure designed to approximate the MSE of an RMST estimator. This measure is similar to the Brier score introduced in \citet{gerds_consistent_2006} to approximate the MSE of a survival function estimator. This idea was introduced in \citet{wang_modeling_2018} for RMST estimation based on generalized linear models. In this work, we extend the measure to any RMST estimator and derive general consistency guarantees.
In addition, a new conformal algorithm for the construction of prediction intervals for restricted times to event is developed, which is inspired by the split algorithm in \citet{lei_distribution-free_2018}. It is further extended to study local and global variable importance within a learning model. In particular, a statistical test for global variable importance is proposed. Those methods are based on the LOCO procedure from \citet{lei_distribution-free_2018}.
All our methods are proved to be asymptotically valid. They are illustrated in simulations and in a real data analysis on breast cancer.
An R package encapsulating our new methods is available at \href{https://github.com/ariane-cwi/conformal}{https://github.com/ariane-cwi/conformal}.

In Section~\ref{sec::notations} we give the main notations used in the following sections. In Sections~\ref{sec::mse},~\ref{sec::predintervals},~\ref{sec::varimportance} we present the new methods described previously, respectively the mean squared error measure, the prediction intervals algorithms and the variable importance measures. The results on simulations are presented in Section~\ref{sec::simulations} and on real data in Section~\ref{sec::realdata}.


\section{Notations and assumptions} \label{sec::notations}

In the context of right-censored data, we denote by $T^*$ the variable of interest, $C$ the censoring time, $T=\min(T^*,C)$ the observed variable and $\delta=\indicator{T^*\leq C}$ the censoring indicator. An observation is then represented by the vector $O = (T,\delta,Z)$ where $Z\in\Rbb^d$ is a covariate vector. We introduce the following notations for the cumulative distribution and survival functions: $F$, $G$, $H$ and $L$ are the cumulative distribution functions of $T^*$, $C$, $T$ and $Z$, respectively. $S=1-F$ is the survival function of $T^*$. For all these functions, the same notations are used for the joint and conditional cumulative distribution functions with respect to $Z$, for instance $F(t \mid z) = \Prob(T^* \leq t \mid Z = z) = 1 - S(t \mid z)$. Finally, we note $P(t,\cdot,z) = \sum_{\delta=0,1}\Prob(T \leq t,\delta, Z \leq z)$. 

Let $\tauH = \inf\{t>0 : 1-H(t \mid Z)= 0 \text{ a.s.} \}$. The RMST is defined for a fixed time horizon $\tau \leq\tauH$ as
\begin{equation}\label{eq::RMSTdef}
     \muT^*(Z) = \Esp[T^* \wedge \tau \mid Z] = \intT S(t \mid Z) dt.
\end{equation}
We suppose that the i.i.d. observations are divided into a training set $\Dn = \{O_i: i \in \I\}$ of size $n$ and a test set $\Dm = \{O_j: j \in \J\}$ of size $m$. We will call a learning model/algorithm $\A$, a function $\A: \Dn \mapsto \muThat \in \M_\A$ that maps a training set $\Dn$ to an RMST estimator $\muThat$, also referred to as a predictor. $\M_\A$ is the space of possible outcomes $\muT(\cdot):\Rbb^d \to\Rbb_+$ of $\A$. We impose that these functions verify $\sup_{z \in \Rbb^d} | \muT(z) | \leq K$ for a positive constant $K$. In the following, when the split conformal intervals will be introduced (see Section~\ref{sec::predintervals}), the training set will be further split into two parts, that is, $\I$ will be divided into two subsets $\I_1$ and $\I_2$ of sizes $n_1$ and $n_2$ respectively, such that $n_1 + n_2 = n$. In that case, $\Dn[k] = \{O_i: i \in \I_k\}$, $k=1,2$, will be the corresponding subsets of $\Dn$. Estimators of the functions $F$, $S$, $G$ and $\muT^*$ computed on the training set $\Dn$ are written $\Fn$, $\Sn$, $\Gn$, and $\muThat$ respectively. If they are computed on one of the subsets $\Dn[k]$, $k=1,2$, they are denoted $\Fn[k]$, $\Sn[k]$, $\Gn[k]$ and $\muThat[k]$, respectively.

Unless mentioned otherwise, we will assume conditional independence in the following sense:
\begin{equation}\label{eq::conditionalindep}
    T^* \indep C \mid Z.
\end{equation}
We will also assume the RMST estimator to be convergent in the following sense: for all $\tau \leq \tauH $, there exists $\muTtilde \in \M_\A$ such that 
\begin{equation} \label{eq::convergence}
    \intZ | \muThat(z) - \muTtilde(z) | L(dz) \xrightarrow[n \to \infty]{} 0 \; \text{in probability.}
\end{equation}

Moreover we say that the model $\A$ is correctly specified if $\muT^* \in \M_\A$ and $\muTtilde = \muT^*$. It should be noted that this assumption allows for model misspecification since we do not impose $ \muTtilde$ to be equal to the true RMST $\muT^*$ (and in practice this will usually not be the case). 
Typical RMST estimators are mentioned in the introduction. Moreover, in Sections~\ref{sec::simulations} and~\ref{sec::realdata}, estimators obtained after integrating the survival function are implemented: they are based on different estimators of the survival function such as the Kaplan-Meier estimator, the Cox model or the RSF. A direct estimator of the RMST is also constructed from a linear model applied to the pseudo-observations~\citep[see][]{andersen_regression_2004}.

Finally, we will assume the censoring estimator $\Gn$ to be consistent using two different definitions. A censoring estimator $\Gn$ is said to be \textit{strongly consistent} if for all $\tau < \tauH$,
\begin{equation} \label{eq::strongconsistency}
    \sup_{s \leq \tau, z \in \Rbb^d} \big| \Gn(s \mid z) - G(s \mid z) \big| \xrightarrow[n \to \infty]{} 0 \text{ a.s. }
\end{equation}
and \textit{weakly consistent} if for all $\tau < \tauH$,
\begin{equation} \label{eq::weakconsistency}
    \intZ \int_0^{\tau} \big| \Gn(s \mid z) - G(s \mid z) \big| P(ds,\cdot,dz) \xrightarrow[n \to \infty]{} 0 \text{ a.s.} 
\end{equation}
We emphasize that, in those two definitions, we impose the censoring estimator to converge towards the true $G$. This is a strong assumption since it imposes that the censoring model is correctly specified. The censoring function $G$ can be estimated by considering $C$ as the variable of interest. For instance, if the censoring $C$ is independent from the time to event $T^*$ and from the covariates $Z$, then the Kaplan-Meier is a strongly consistent estimator. If the censoring depends on the covariates, it becomes necessary to model the conditional distribution of $C$ accordingly. Various modeling options are explored in~\citet{gerds_consistent_2006}, including the Cox model, the Aalen additive model, and the kernel-type model introduced in~\citet{dabrowska_uniform_1989}. Alternatively, approaches like the single-index method proposed in~\citet{bouaziz_conditional_2010} or the RSF method outlined in~\citet{ishwaran_random_2008} can also be applied.

\section{Performance criterion for the estimation of the RMST} \label{sec::mse}

When the data are fully observed, a classical quantity to measure the prediction performance of an estimator $\muThat$ is the Residual Sum of Squares (RSS)
\begin{equation}\label{eq::RSS}
    \text{RSS}(\muThat,\Dm) = \frac{1}{m} \sum_{j \in \J} \Big(T_j^* \wedge \tau - \muThat(Z_j) \Big)^2,
\end{equation}
where $\muThat$ is computed on the training set and the RSS is evaluated on the test set. When the RMST estimator is convergent (see Equation~\eqref{eq::convergence}) then, as $n$ and $m$ go to infinity, the RSS will converge to the Mean Squared Error (MSE) 
\[
    \text{MSE}(\muTtilde) = \Esp\Big[\big(T^* \wedge \tau-\muTtilde(Z)\big)^2 \Big].
\]
However, in the context of right-censored data, the event times are not all observed and the score~\eqref{eq::RSS} cannot be computed. This issue has been addressed in~\cite{gerds_consistent_2006} when the goal is to estimate the survival function. In our work, we extended this approach to estimate the MSE of an RMST estimator, based on IPCW, similarly to the MSD criterion of~\citet{wang_modeling_2018}. We define:
\begin{equation} \label{eq::WRSSdef}
    \text{WRSS}(\muThat, \Gn, \Dm) = \frac{1}{m} \sum_{j \in \J} \Big(T_j \wedge \tau - \muThat(Z_j) \Big)^2 \whj,
\end{equation}
where
\begin{equation} \label{eq::weights}
    \whj = \frac{\indicator{T_j \leq \tau} \delta_j}{1-\Gn(T_j- \mid Z_j)} + \frac{\indicator{T_j > \tau}}{1-\Gn(\tau \mid Z_j)},
\end{equation}
and $\Gn$ is a consistent estimator of the censoring cumulative distribution function. We have the following result.

\begin{thm} \label{thm::WRSS}
Let $\Gn$ be a consistent estimator in the weak sense, as defined by Equation~\eqref{eq::weakconsistency}.
Then, under conditional independence (see Equation~\eqref{eq::conditionalindep}) we have:
\[
    \sup_{\tau \leq \tauH } |\text{WRSS}(\muThat, \Gn, \Dm) - \text{MSE}(\muTtilde) | \to 0 \text{ as } n,m \to \infty \; \text{in probability.}
\]
\end{thm}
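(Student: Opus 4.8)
The plan is to compare $\text{WRSS}(\muThat,\Gn,\Dm)$ with $\text{MSE}(\muTtilde)$ by passing through two intermediate quantities: first one replaces the estimated censoring distribution $\Gn$ by the true $G$, then the estimated predictor $\muThat$ by its limit $\muTtilde$. Writing $\omega_j$ for the weight~\eqref{eq::weights} with $G$ in place of $\Gn$, and, for a $\Dn$-measurable predictor $\mu$, $\text{MSE}(\mu):=\Esp\big[(T^*\wedge\tau-\mu(Z))^2\mid\Dn\big]$ (so that $\text{MSE}(\muTtilde)$ is the quantity in the statement), we decompose
\[
\text{WRSS}(\muThat,\Gn,\Dm)-\text{MSE}(\muTtilde)=A_n(\tau)+B_n(\tau)+C_n(\tau),
\]
with $A_n(\tau)=\text{WRSS}(\muThat,\Gn,\Dm)-\text{WRSS}(\muThat,G,\Dm)$, $B_n(\tau)=\text{WRSS}(\muThat,G,\Dm)-\text{MSE}(\muThat)$ and $C_n(\tau)=\text{MSE}(\muThat)-\text{MSE}(\muTtilde)$; it then suffices to show that $\sup_{\tau\le\tauH}|A_n(\tau)|$, $\sup_{\tau\le\tauH}|B_n(\tau)|$ and $\sup_{\tau\le\tauH}|C_n(\tau)|$ all tend to $0$ in probability.

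The conceptual core is the IPCW unbiasedness identity: under conditional independence~\eqref{eq::conditionalindep}, for any bounded measurable $\mu$,
\[
\Esp\Big[(T\wedge\tau-\mu(Z))^2\big(\tfrac{\indicator{T\le\tau}\,\delta}{1-G(T-\mid Z)}+\tfrac{\indicator{T>\tau}}{1-G(\tau\mid Z)}\big)\,\big|\,Z\Big]=\Esp\big[(T^*\wedge\tau-\mu(Z))^2\mid Z\big].
\]
On $\{T\le\tau,\delta=1\}$ one has $T=T^*\le\tau$, and on $\{T>\tau\}$ one has $T\wedge\tau=\tau=T^*\wedge\tau$, so on the support of the weights the squared residual equals $(T^*\wedge\tau-\mu(Z))^2$; conditioning further on $(T^*,Z)$ and using $\Esp[\indicator{C\ge T^*}\mid T^*,Z]=1-G(T^*-\mid Z)$ and $\Esp[\indicator{C>\tau}\mid Z]=1-G(\tau\mid Z)$ cancels the two denominators and recombines $\indicator{T^*\le\tau}$ with $\indicator{T^*>\tau}$. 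Integrating over $Z$ and over the independent test sample gives $\Esp[\text{WRSS}(\mu,G,\Dm)\mid\Dn]=\text{MSE}(\mu)$, and the same computation with $\mu\equiv1$ gives $\Esp[\omega_j\mid Z_j]=1$. Hence, conditionally on $\Dn$, $B_n(\tau)$ is an average of $m$ i.i.d.\ centred variables, so $\Esp\big[B_n(\tau)^2\mid\Dn\big]\le m^{-1}\Esp\big[(T\wedge\tau-\muThat(Z))^4\,\omega^2\mid\Dn\big]\to0$, using that $\omega$ is square-integrable for $\tau$ bounded away from $\tauH$; thus $B_n(\tau)\to0$ in probability.

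For $C_n(\tau)$, the difference-of-squares factorization together with $\sup_z|\mu(z)|\le K$ and $T^*\wedge\tau\le\tauH$ gives $|C_n(\tau)|\le2(K+\tauH)\intZ|\muThat(z)-\muTtilde(z)|\,L(dz)$, which tends to $0$ in probability by the convergence assumption~\eqref{eq::convergence}. For $A_n(\tau)$, bounding $(T_j\wedge\tau-\muThat(Z_j))^2\le(K+\tauH)^2$ reduces matters to $m^{-1}\sum_{j\in\J}|\whj-\omega_j|$; using $\tfrac{1}{1-\Gn}-\tfrac{1}{1-G}=\tfrac{G-\Gn}{(1-\Gn)(1-G)}$, passing to conditional expectations given $\Dn$, and cancelling one factor $1-G$ via the $\indicator{C\ge T^*}$ step as above, the $\indicator{T\le\tau}\,\delta$ part is bounded — after turning the integral against the law of $T^*$ into one against the law of $T$, and using that $1-G(T-\mid Z)=\Prob(C\ge T\mid Z)$ stays bounded below on $\{\delta=1\}$ — by a constant times $\intZ\int_0^{\tauH}|\Gn(s\mid z)-G(s\mid z)|\,P(ds,\cdot,dz)$, which vanishes a.s.\ by weak consistency~\eqref{eq::weakconsistency}; since $\indicator{T\le\tau}\le1$, this bound is uniform in $\tau$. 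The $\indicator{T>\tau}$ part is treated analogously through the identity $\Esp[\indicator{T>\tau}\,h(Z)/(1-G(\tau\mid Z))\mid\Dn]=\Esp[(1-F(\tau\mid Z))\,h(Z)\mid\Dn]$, together with lower bounds on $1-G(\cdot\mid z)$ away from $\tauH$ and a truncation handling $\tau$ near $\tauH$.

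I expect the main obstacle to be twofold. First, making everything uniform in $\tau$: since $\tau\mapsto\text{MSE}(\muTtilde)$ is continuous on $[0,\tauH]$, one reduces the supremum to a maximum over a finite grid by controlling the oscillation of $\tau\mapsto\text{WRSS}(\muThat,\Gn,\Dm)$ (and of $A_n,B_n$) over grid cells — the maps $\tau\mapsto T_j\wedge\tau$ and $\tau\mapsto\Gn(\tau\mid Z_j)$ being monotone, the jumps of $\indicator{T_j\le\tau}$ sitting at the $T_j$ and aggregating through the continuity points of $H$, and the dependence of $\muThat$ on $\tau$ being controlled by an equicontinuity property of the estimators in $\tau$ (automatic in the usual case where RMST estimators, as time-integrals of survival functions over $[0,\tau]$, are $1$-Lipschitz in $\tau$). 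Second, and related, controlling all of this uniformly up to the endpoint $\tauH$: the $\indicator{T>\tau}$ weight involves $\Gn(\tau\mid Z)$ at a fixed argument, where only the weak-consistency integral~\eqref{eq::weakconsistency} is at hand, so the truncation near $\tauH$ and the positivity bookkeeping on the denominators — which hold on the events that actually contribute to the weights, since $\{\delta=1\}$ forces $C\ge T$ — must be carried out carefully. These two points are, I believe, where the real work of the proof lies.
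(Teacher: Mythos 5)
Your proposal is correct and follows essentially the same route as the paper, whose proof of Theorem~\ref{thm::WRSS} is only a two-sentence pointer to Theorem~5.2 of \citet{gerds_consistent_2006} resting on exactly the ingredients you use: the three-term decomposition (replace $\Gn$ by $G$, empirical average versus conditional expectation, then $\muThat$ versus $\muTtilde$), the IPCW unbiasedness identity under~\eqref{eq::conditionalindep}, weak consistency~\eqref{eq::weakconsistency}, and the uniform bound $K$ on the predictors. You have in fact reconstructed that argument in more detail than the paper provides, and correctly flagged the two places (uniformity in $\tau$ and the behaviour of the denominators near $\tauH$) where the remaining technical work lies.
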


Note that the validity of the result relies on Equation~\eqref{eq::weakconsistency}. If the latter is verified with the estimator of the censoring distribution function computed on the training set only, then it also holds on the pooled data including both the training and the test set. In applications we recommend using all the data to estimate $G$. Also, a straightforward result from Theorem~\ref{thm::WRSS} is that our WRSS estimator is asymptotically equivalent to the RSS estimator defined in Equation~\eqref{eq::RSS} since both estimators converge towards $\text{MSE}(\muTtilde)$. 

Finally, we recall that the mean squared error can be decomposed in the following way:
\begin{equation}\label{eq::MSEdecomposition}
    \text{MSE}(\muTtilde) = \Esp\Big[\big(\muT^*(Z) - \muTtilde(Z)\big)^2\Big] + \Esp \Big[\big( T^* \wedge \tau - \muT^*(Z)\big)^2 \Big],
\end{equation}
where on the right-hand side of the equation, the first quantity represents an imprecision term and the second one, an inseparability term~\citep[see][]{gerds_consistent_2006}.
If the model is correctly specified, i.e. $\muTtilde = \muT^*$, then the imprecision term will vanish. In that case, the WRSS estimator will converge to the inseparability term, as $n$ and $m$ go to infinity.


\section{Prediction intervals} \label{sec::predintervals}

In this section, we explain how a prediction interval can be built from the RMST estimator. Our method is based on the conformal intervals method, developed initially by~\citet{vovk_algorithmic_2005} and extended, among others, by~\citet{lei_distribution-free_2018}. In the latter article, the authors provide algorithms to construct prediction intervals that have finite-sample validity without making any assumptions about the distribution of the observations. 
More specifically, for a given confidence level $1-\alpha$ and a new individual with covariate $Z$, the aim is to construct a prediction interval $\C(Z)\subseteq \Rbb$ such that 
\[
     \Prob\big(T^* \wedge \tau \in \C(Z)\big) \geq 1 - \alpha.
\]
Our method adapts the conformal prediction approach from~\citet{lei_distribution-free_2018} using the IPCW technique to deal with right-censored data.

\subsection{IPCW Split Conformal algorithm} \label{sec::split}

Since the original conformal prediction algorithm is computationally intensive, we rely on the so-called \textit{split conformal prediction} developed by \citet{lei_distribution-free_2018} as an alternative approach. Its computational cost is a small fraction of the full conformal method and finite-sample guarantees are very similar. It operates as follows. 
First divide $\I$ into two subsets $\I_1$ and $\I_2$ of size $n_1$ and $n_2$ respectively, such that $n_1 + n_2 = n$. Let $\Dn[k] = \{O_i: i \in \I_k\}$, $k=1,2$, be the corresponding subsets of $\Dn$. Train the learning algorithm on $\Dn[1]$. The resulting estimator $\muThat[1]$ provides predictions for the data in $\Dn[2]$, namely $\{\muThat[1](Z_i)$, $i \in \I_2\}$. 

For ease of presentation, consider first a situation where the data are uncensored. In this setting, the residuals 
\[
    R_i^* = |T_i^* \wedge \tau - \muThat[1](Z_i)|, i \in \I_2,
\] 
can be directly computed from the data. As a result, the cumulative distribution function of the residuals, defined for all $t \geq 0$ by $\R^*(t) := \Prob(R^* \leq t)$, can be approximated by the empirical estimator
\[
\Rnii(t) = \frac{1}{n_2} \sum_{i \in \I_2} \indicator{R_i^* \leq t}.
\]
Finally, the prediction interval for a new individual with covariate $Z$ is defined as
\[
    \Csplit(Z) = [\muThat[1](Z) - \qhat^*,\muThat[1](Z) + \qhat^*],
\]
with $\qhat^* = \inf\{t:\Rnii(t) \geq 1-\alpha \} =  R^*_{(\lceil n_2(1-\alpha) \rceil)}$ the $1 - \alpha$ quantile of the empirical distribution $\Rnii$, where $R^*_{(1)}\leq\ldots\leq R^*_{(n_2)}$ denotes the order statistics of $R^*_{1},\ldots R^*_{n_2}$. By exchangeability between the residual at $(T^*,Z)$ and those at $(T^*_i,Z_i), i \in \I_2$, we have
\[
\Prob(T^* \wedge \tau \in \Csplit(Z))  = \Prob(R^* \leq \qhat^*) \geq 1 - \alpha. 
\]

However, when the data are censored, the residuals $R_i^*$ can no longer be computed. To that purpose, we introduce the residuals of the observed times
\[
    R_i = |T_i \wedge \tau - \muThat[1](Z_i)|, i \in \I_2,
\]
and we adjust the estimator of the cumulative distribution function of these residuals by IPCW, using the same weights as in Equation~\eqref{eq::weights}. The distribution estimator now becomes
\[
\RniiG(t) = \frac{1}{\sum_{i \in \I_2} \whi} \sum_{i \in \I_2} \indicator{R_i \leq t} \whi.
\]

We present below the algorithm that summarizes the procedure. 
Then, Theorem~\ref{thm::conformal_pred_int} ensures that the prediction intervals produced by Algorithm~\ref{algo::IPCWsplit} have asymptotically valid coverage.

\begin{algorithm}[H] 
\begin{algorithmic}
\REQUIRE Data $\Dn = \{O_i: i \in \I\}$, miscoverage level $\alpha \in (0,1)$, regression algorithm $\A$ for the RMST, regression algorithm $\B$ for the censoring function $G$, split coefficient $\rho \in (0,1)$
\ENSURE Prediction interval, over $z \in \Rbb^d$
\STATE $\Gn = \B(\Dn)$ 
\STATE Randomly split $\I$ into subsets $\I_1,\I_2$ of sizes $n_1 = \lfloor \rho n 
\rfloor$ and $n_2 = n - n_1$ such that $\Dn[1] = \{O_i: i \in \I_1\}$ and $\Dn[2] = \{O_i: i \in \I_2\}$
\STATE $\muThat[1] = \A(\Dn[1])$
\STATE $R_i = | T_i \wedge \tau  - \muThat[1](Z_i) |$ and $\whi = \frac{\indicator{T_i \leq \tau} \delta_i}{1-\Gn(T_i- \mid Z_i)} + \frac{\indicator{T_i > \tau}}{1-\Gn(\tau \mid Z_i)}, i \in \I_2$ 
\STATE $\qhat = \inf\{t : \RniiG(t) \geq 1-\alpha\}$ the $(1-\alpha)$-quantile of the empirical cumulative distribution function of the residuals defined for all $t \in \Rbb$ by \[\RniiG(t) = \frac{1}{\sum_{i \in \I_2}\whi} \sum_{i \in \I_2} \indicator{R_i \leq t} \whi\]
\RETURN $\Csplit(z) = [ \muThat[1](z) - \qhat, \muThat[1](z) + \qhat]$ for all $z \in \Rbb^d$
\end{algorithmic}
\caption{IPCW Split Conformal prediction}
\label{algo::IPCWsplit}
\end{algorithm}

\begin{thm} \label{thm::conformal_pred_int}
Suppose that $\Gn$ is strongly consistent in the sense defined by Equation~\eqref{eq::strongconsistency}. If the observations $O_i$, $i\in \I$, are i.i.d., then for a new individual with independent event time $T^*$ and covariates $Z$ of the same law as the observations, under conditional independence (see Equation~\eqref{eq::conditionalindep}),
\[
    \lim_{n_2 \to \infty} \Prob\big(T^* \wedge \tau \in \Csplit(Z)\big) \geq 1 - \alpha
\]
for the split conformal prediction interval $\Csplit$ constructed by Algorithm~\ref{algo::IPCWsplit}. In addition, if the residuals have a continuous distribution, then 
\[
    \lim_{n_2 \to \infty} \Prob\big(T^* \wedge \tau \in \Csplit(Z)\big) = 1 - \alpha.
\]
\end{thm}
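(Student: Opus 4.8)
The plan is to follow the split‑conformal template of \citet{lei_distribution-free_2018}, replacing the finite‑sample exchangeability argument (which censoring and the plug‑in estimate of $G$ destroy) by an asymptotic one: I show that the IPCW‑reweighted empirical distribution $\RniiG$ of the observed residuals consistently estimates the distribution of the unobservable \emph{oracle} residual. First I would work conditionally on the first split $\Dn[1]$, so that $\muThat[1]$ is a fixed element of $\M_\A$, and set $\R^*(t) := \Prob\big(|T^*\wedge\tau-\muThat[1](Z)|\leq t\mid\Dn[1]\big)$, the c.d.f.\ of the oracle residual $R^* = |T^*\wedge\tau-\muThat[1](Z)|$ of a fresh individual. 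Since $(T^*,Z)$ is independent of the training data and $\qhat$ is a measurable function of $\Dn[2]$ and $\Gn$, one has $\Prob\big(T^*\wedge\tau\in\Csplit(Z)\mid\Dn\big) = \R^*(\qhat)$; after integrating out $\Dn$ and then $\Dn[1]$ (harmless because $\R^*(\qhat)\in[0,1]$), it suffices to prove $\R^*(\qhat)\geq 1-\alpha-o_\Prob(1)$, with moreover $\R^*(\qhat)\to 1-\alpha$ in probability when $\R^*$ is continuous.

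The heart of the proof is the uniform convergence
\[
\sup_{t\geq 0}\big|\RniiG(t) - \R^*(t)\big| \xrightarrow[n\to\infty]{} 0 \quad\text{in probability,}
\]
which I would establish in three steps. (i) Write $\wi$ for the weight of \eqref{eq::weights} with the true $G$ instead of $\Gn$. Exactly as in the proof of Theorem~\ref{thm::WRSS}, on the event where an individual's weight is nonzero its observed residual $R = |T\wedge\tau-\muThat[1](Z)|$ equals the oracle residual; conditioning on $(T^*,Z)$ and using $T^*\indep C\mid Z$ then gives the IPCW identity $\Esp\big[\indicator{R\leq t}\,\w\mid Z,\Dn[1]\big] = \Esp\big[\indicator{R^*\leq t}\mid Z,\Dn[1]\big]$, whence $\Esp\big[\indicator{R\leq t}\,\w\mid\Dn[1]\big] = \R^*(t)$ and $\Esp[\w\mid\Dn[1]] = 1$. (ii) Conditionally on $\Dn[1]$ the pairs $(R_i,\wi)_{i\in\I_2}$ are i.i.d.; the class $\{\,\indicator{R\leq t}\,\w : t\geq 0\,\}$ is Glivenko--Cantelli (a fixed integrable envelope $\w$ times the VC class of half‑line indicators — here $\tau<\tauH$ enters, as it forces $\eta:=\inf_z(1-G(\tau\mid z))>0$ and hence $\w\leq 1/\eta$), so $\sup_t\big|\tfrac1{n_2}\sum_{i\in\I_2}\indicator{R_i\leq t}\wi - \R^*(t)\big|\to 0$ and $\tfrac1{n_2}\sum_{i\in\I_2}\wi\to 1$ almost surely. (iii) Strong consistency \eqref{eq::strongconsistency} keeps the estimated denominators above $\eta/2$ for $n$ large, so $\max_{i\in\I_2}|\whi-\wi|\leq(2/\eta^2)\sup_{s\leq\tau,z}|\Gn(s\mid z)-G(s\mid z)|\to 0$ a.s., and replacing $\wi$ by $\whi$ perturbs both averages by at most $\max_i|\whi-\wi|$ uniformly in $t$. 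Dividing numerator by denominator delivers the displayed convergence.

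From there the conclusion is routine. Since $\RniiG$ is a right‑continuous step function, $\RniiG(\qhat)\geq 1-\alpha$, hence $\R^*(\qhat)\geq\RniiG(\qhat)-\sup_t|\RniiG(t)-\R^*(t)|\geq 1-\alpha - o_\Prob(1)$; taking expectations gives $\liminf_n\Prob\big(T^*\wedge\tau\in\Csplit(Z)\big)\geq 1-\alpha$. When $\R^*$ is continuous, for every $\varepsilon>0$ one also has $\RniiG(\qhat-\varepsilon)<1-\alpha$, so $\R^*(\qhat-\varepsilon)<1-\alpha+\sup_t|\RniiG(t)-\R^*(t)|$; letting $\varepsilon\downarrow0$ and using continuity yields $\R^*(\qhat)\leq 1-\alpha+o_\Prob(1)$, so $\R^*(\qhat)\to 1-\alpha$ in probability and bounded convergence gives the equality.

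The step I expect to be the main obstacle is (ii)--(iii): because $\Gn=\B(\Dn)$ is fitted on the whole training set, the weights $\whi$ are \emph{not} independent given $\Dn[1]$ and $\RniiG$ is not a bona fide i.i.d.\ average, so one cannot apply a Glivenko--Cantelli theorem to it directly. This is precisely why the theorem assumes the strong consistency \eqref{eq::strongconsistency} rather than the weak consistency \eqref{eq::weakconsistency} that sufficed for Theorem~\ref{thm::WRSS}: the a.s.\ uniform convergence of $\Gn$ lets one first prove everything for the genuinely i.i.d.\ oracle‑weighted empirical and then transfer it at a uniform cost of order $\sup|\Gn-G|$. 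The only other technical care needed is the uniform lower bound $\eta>0$ on the weight denominators, which is exactly what $\tau<\tauH$ provides.
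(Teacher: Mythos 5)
Your proposal is correct and its core is the same as the paper's: the IPCW identity for the oracle weights $\wi$ (the paper's Lemma~\ref{lem::ipcw_cond_mean}), a conditional law of large numbers for the genuinely i.i.d.\ oracle-weighted sums given $\Dn[1]$, and a transfer to the estimated weights $\whi$ at a cost of order $\sup_{s\leq\tau,z}|\Gn(s\mid z)-G(s\mid z)|$ using the strong consistency~\eqref{eq::strongconsistency} --- this is exactly the content of the paper's Lemma~\ref{lem::ipcw_prob_conv}, and you correctly identified why strong rather than weak consistency is needed here. Where you diverge is the endgame. The paper establishes only \emph{pointwise} a.s.\ convergence $\RniiG(t)\to\R^*_1(t)$, then invokes the quantile-convergence lemma (Lemma 21.2 in van der Vaart) to get $\qhat^{\alpha}\to q^{\alpha}$ and finally the continuous mapping theorem to conclude $\R^*_1(\qhat^{\alpha})\to\R^*_1(q^{\alpha})\geq 1-\alpha$. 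You instead upgrade to \emph{uniform} convergence $\sup_t|\RniiG(t)-\R^*(t)|\to 0$ via a Glivenko--Cantelli argument (VC class of half-line indicators times the bounded envelope $\w\leq 1/\eta$) and then read the coverage bound directly off the defining inequality $\RniiG(\qhat)\geq 1-\alpha$, with the matching upper bound $\RniiG(\qhat-\varepsilon)<1-\alpha$ in the continuous case. Your route costs a slightly heavier empirical-process tool but buys robustness: the paper's final continuous-mapping step tacitly needs $\R^*_1$ to be continuous at $q^{\alpha}$ (and $1-\alpha$ to be a continuity point of the quantile function), conditions that are only assumed in the second half of the theorem, whereas your inequality-based argument delivers the $\geq 1-\alpha$ part without any continuity hypothesis. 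Both proofs leave the final integration over $\Dn$ (bounded convergence) implicit or nearly so; you at least flag it explicitly.
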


\begin{rk}
The validity of the results only relies on the accuracy of the estimate of the cumulative distribution function of the residuals, which depends on $n_2$. However, a higher value of $n_1$ yields a better predictor $\muThat[1]$, resulting in smaller residuals and shorter prediction intervals, which is desired in applications.
\end{rk}

\begin{rk} \label{rk::discontinuousres} 
    In most cases the residuals will have a continuous distribution. However, it may happen that the residuals' distribution puts a positive mass on discrete values when some predictions are identical and the corresponding observations $T_i \wedge \tau$ are also equal.
    This can occur if some individuals have the exact same covariate values or if they only differ for some covariates that are not used by the learning model. It is also common to use a reference model that does not take into account the covariates at all, in which case all predictions will be identical. When the residuals have a positive mass on some discrete values it may not be possible to exactly reach the desired confidence level and, in that case, the quantile of the residuals $\qhat$ will be chosen such that the coverage exceeds $1-\alpha$. Another approach consists in randomly breaking the ties in the residuals, as in~\citet{kuchibhotla_exchangeability_2021}.
\end{rk}

\subsection{IPCW In-Sample Split Conformal algorithm} \label{sec::roo}

The split algorithm introduced in the previous section aims at producing a prediction interval with the correct coverage for a new individual independent from the training set. However it might also be of interest to construct prediction intervals for the individuals in the training set itself. A simple, yet computationally inefficient, way to compute the prediction interval $\mathcal{C}(Z_i)$ for each $i\in \I$, is to apply Algorithm~\ref{algo::IPCWROO} using all observations but $O_i$ as training set. A more interesting approach is the Rank-One-Out (ROO) Split Conformal algorithm introduced by~\citet{lei_distribution-free_2018} to construct valid prediction intervals for all training data without requiring much more calculation. Similarly to Section~\ref{sec::split}, we adapt the ROO algorithm to the right-censoring framework using IPCW. Our weighted leave-one-out method is presented in Algorithm~\ref{algo::IPCWROO}. Asymptotic guarantees as in Theorem~\ref{thm::conformal_pred_int} can be similarly derived for Algorithm~\ref{algo::IPCWROO} but have been omitted for the sake of  conciseness.

\begin{algorithm}[H] 
\begin{algorithmic}
\REQUIRE Data $\Dn = \{O_i: i \in \I\}$, miscoverage level $\alpha \in (0,1)$, regression algorithm $\A$ for the RMST, regression algorithm $\B$ for the censoring function $G$, split coefficient $\rho \in (0,1)$
\ENSURE Prediction intervals at each $Z_i$, $i \in \I$
\STATE $\Gn = \B(\Dn)$
\STATE Randomly split $\I$ into two equal-sized subsets $\I_1,\I_2$ of sizes $n_1 = \lfloor n/2 \rfloor$ and $n_2 = n - n_1$ such that $\Dn[1] = \{O_i: i \in \I_1\}$ and $\Dn[2] = \{O_i: i \in \I_2\}$
\FOR{$k \in \{1,2\}$}
\STATE $\muThat[k] = \A(\Dn[k])$
\STATE $R_i = | T_i \wedge \tau  - \muThat[k](Z_i) |$ and $\whi = \frac{\indicator{T_i \leq \tau} \delta_i}{1-\Gn(T_i- \mid Z_i)} + \frac{\indicator{T_i > \tau}}{1-\Gn(\tau \mid Z_i)}, i \not\in \I_k$ 
\FOR{$i \not\in \I_k$}
\STATE $\hat{q}_{n_k,i} = \inf\{t : \hat{\R}_{n_k,i}^{\hat{G}}(t) \geq 1-\alpha\}$ the $(1-\alpha)$-quantile of the empirical cumulative distribution function of the residuals defined for all $t \in \Rbb$ by \[\hat{\R}_{n_k,i}^{\hat{G}}(t) = \frac{1}{\sum_{j \ne i, j \not\in \I_k}\whj} \sum_{j \ne i, j \not\in \I_k} \indicator{R_j \leq t} \whj\]
\STATE $\Croo(Z_i) = [ \muThat[k](Z_i) - \hat{q}_{n_k,i}, \muThat[k](Z_i) + \hat{q}_{n_k,i}]$
\ENDFOR
\ENDFOR
\RETURN $\Croo(Z_i)$ for all $i \in \I$
\end{algorithmic}
\caption{IPCW Rank-One-Out Split Conformal algorithm}
\label{algo::IPCWROO}
\end{algorithm}


\section{Variable importance} \label{sec::varimportance}

Conformal inference can also be used to assess the importance of each variable in the learning model. In particular, the Leave-One-Covariate-Out (LOCO) inference method described by \citet{lei_distribution-free_2018} can be adapted to the estimation of the RMST with censored data based on the new algorithms described in Section~\ref{sec::predintervals}.
To evaluate the importance of the $k$th variable, $k \in \{1,\dots,d\}$, the approach involves comparing the accuracy of the predictor trained with or without the $k$th variable. The magnitude of the performance difference indicates the variable significance in the model. Specifically, if we denote $\muThat[1]^{(-k)}$ the predictor trained on data $\Dn[1]$ without the $k$th variable, we are interested in the random variable
\begin{equation} \label{eq::deltak}
    \Delta_k(Z,T^*) = |T^* \wedge \tau- \muThat[1]^{(-k)}(Z)| - |T^* \wedge \tau - \muThat[1](Z)|
\end{equation} 
that measures the increase in prediction accuracy resulting from the inclusion of the $k$th variable in the model. The higher its value above zero, the greater the variable importance. This analysis can be conducted globally to assess the variable's overall importance in the model or locally to identify specific variable values that have a more significant impact on the outcome.


\subsection{Local measure of variable importance} \label{sec::vimpl}

The aim is to construct a prediction interval for $\Delta_k(Z,T^*)$. Let $\C_n$ be a prediction interval constructed with the split procedure for $T^*$ given $Z$ with coverage $1-\alpha$. For all $k = 1,\dots,d$, define
$$
    W_{n,k}(z) = \{ |t \wedge \tau - \muThat[1]^{(-k)}(z)| - |t \wedge \tau - \muThat[1](z)| : t \in \C_n(z) \}.
$$
Then from the asymptotic coverage of the prediction interval $\C_n$, we have
\begin{equation} \label{eq::Wnk}
    \Prob(\Delta_k(Z,T^*) \in W_{n,k}(Z), \text{ for all } k = 1,\dots,d) \xrightarrow[n \rightarrow \infty]{} 1 - \alpha.
\end{equation}   
It should be stressed that the result is not given conditionally on $Z=z$, however varying the value of the covariate will nevertheless have an impact on the intervals. Therefore, it will be of interest to evaluate the effect of the $k$th variable by plotting the intervals $W_{n,k}(Z_j)$ for $j = 1, \dots, m$.


\subsection{Global measure of variable importance} \label{sec::vimpg}

In this section, we consider that the data set $\Dn[1]$ is fixed. As opposed to the previous sections, we make the additional assumption that the censoring $C$ is independent from the time to event $T^*$ and from the covariates $Z$. We use the IPCW technique to construct the statistical test below, specifying that the weights use the Kaplan-Meier estimator for the censoring distribution based on the sample $\Dn[2]$. Formally, with $1-\Gn[2]$ denoting the Kaplan-Meier estimator of the censoring survival function, the weights $\whi$, $i \in \I_2$, become
\begin{equation} \label{eq::weights2}
    \whi = \frac{\indicator{T_i \leq \tau} \delta_i}{1-\Gn[2](T_i-)} + \frac{\indicator{T_i > \tau}}{1-\Gn[2](\tau)}.
\end{equation}

The global measure of the importance of the $k$th variable is constructed from the distribution of $\Delta_k(Z,T^*)$ marginally over $(Z,T^*)$. We consider the cumulative distribution function of $\Delta_k(Z,T^*)$ conditional on $T^*$ being lower than the threshold $\tau$, with $\tau < \tauH $. For simplicity, we suppose this distribution to be continuous and we are interested in inferring its median:
\begin{equation*}
    m_k = \text{median}\left[\Delta_k(Z,T^*) \mid T^* \leq \tau \right].
\end{equation*}
Specifically, we want to perform the following test
\[
    H_0 : m_k \leq 0 \text{ versus } H_1 : m_k > 0,
\]
which is equivalent to the test
\[
    H_0 : p_k \leq 1/2 \text{ versus } H_1 : p_k > 1/2.
\]
with $p_k = \Prob(\Delta_k(Z,T^*) \geq 0 \mid T^* \leq \tau)$.
Let $\Phi_k(t,z) = \indicator{|t - \muThat[1]^{(-k)}(z)| - |t - \muThat[1](z)| \geq 0, 0 \leq t \leq \tau}$. In particular, note that 
\[p_k  = \frac{1}{1-S(\tau)} \int \Phi_k(u,z) dF(u,z).\] 

We introduce the new test statistic 
\begin{equation} \label{eq::vimpgstatistic}
    T_{n_2} = \sqrt{\frac{n_2}{\hat{\sigma}^2(\Phi_k)}}\left(\frac{1}{1-\Sn[2](\tau)}\frac{1}{n_2} \sum_{i \in \I_2} \Phi_k(T_i,Z_i) \whi - \frac{1}{2} \right),
\end{equation}
where $\Sn[2]$ denotes the Kaplan-Meier estimator of the survival function, the censoring weights $\whi$, $i \in \I_2$ are defined as in Equation~\eqref{eq::weights2}, and
\begin{align*}
    \hat{\sigma}^2(\Phi_k) & = \frac{(1-\Sn[2](\tau))^{-2}}{n_2} \sum_{i \in \I_2} \indicator{T_i \leq \tau} \Bigg( \Phi_k(T_i,Z_i)\whi \\
    & \quad - \frac{\delta_i}{\hat{Y}_{n_2}(T_i)} \frac{1}{n_2} \sum_{j \in \I_2}  \left( \indicator{T_i \leq T_j } - \frac{\Sn[2](\tau)}{1-\Sn[2](\tau)} \right) \Phi_k(T_j,Z_j) \whj \Bigg)^2,
\end{align*}
with $\hat{Y}_{n_2}(t) = \frac{1}{n_2} \sum_{i \in \I_2} \indicator{T_i \geq t}$. This is a modification of the standard sign test statistic suited for right-censored data. In Theorem~\ref{thm::vimpG}, we show that this test statistic follows asymptotically a Gaussian distribution with variance equal to $1$ under $H_0$. This allows us to construct the statistical test with predefined level. The construction of an asymptotic confidence interval for $p_k$ is also proposed in Theorem~\ref{thm::vimpG}. 

\begin{thm} \label{thm::vimpG}
Let $\Dn[1]$ be a fixed data set. Let $\Sn[2]$, $1-\Gn[2]$ denote the Kaplan-Meier estimators of the functions $S$, $1-G$ respectively, computed on a data set $\Dn[2]$ independent from $\Dn[1]$. Let the censoring be independent from the time to event and from the covariates. We consider the weights $\whi$ as defined in Equation~\eqref{eq::weights2}. Then, under $H_0$, the test statistic $T_{n_2}$ defined in Equation~\eqref{eq::vimpgstatistic} follows asymptotically a Gaussian distribution with variance equal to $1$. Let $q_{1-\alpha}^{\N(0,1)}$ denote the $1-\alpha$ quantile of this distribution, then
\begin{equation*}
    \lim_{n_2 \to \infty} \Prob_{H_0}\left(T_{n_2} > q_{1-\alpha}^{\N(0,1)} \right) \geq \alpha.
\end{equation*}
We also have
\begin{equation} \label{eq::pkinterval}
    \lim_{n_2 \to \infty} \Prob\left(p_k \in \left[\frac{1}{1-\Sn[2](\tau)}\frac{1}{n_2} \sum_{i \in \I_2} \Phi_k(T_i,Z_i) \whi \pm \sqrt{\frac{\hat{\sigma}^2(\Phi_k)}{n_2}}q_{1-\alpha/2}^{\N(0,1)}\right] \right) = 1 - \alpha.
\end{equation}
\end{thm}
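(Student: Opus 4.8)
The plan is to realize the statistic appearing in $T_{n_2}$ as a smooth (ratio) functional of the Kaplan--Meier estimator built on $\Dn[2]$, obtain its asymptotic normality from the i.i.d.\ expansion of Kaplan--Meier integrals, and then check that $\hat\sigma^2(\Phi_k)$ is consistent for the limiting variance. Since $\Dn[1]$ is fixed, $\muThat[1]$, $\muThat[1]^{(-k)}$ and hence $\Phi_k$ are deterministic and bounded by $1$. A useful reduction: because $\Phi_k(t,z)=0$ for $t>\tau$, one has $\Phi_k(T_i,Z_i)\whi=\delta_i\indicator{T_i\le\tau}\Phi_k(T_i,Z_i)/(1-\Gn[2](T_i-))$, so the second half of the weight never contributes; combined with the representation of the Kaplan--Meier estimator as an IPCW average (Satten and Datta), $1-\Sn[2](\tau)=\Fn[2](\tau)=n_2^{-1}\sum_{i\in\I_2}\delta_i\indicator{T_i\le\tau}/(1-\Gn[2](T_i-))$, this shows the statistic equals $\hat p_k=\big(n_2^{-1}\sum_{i\in\I_2}\Phi_k(T_i,Z_i)\whi\big)/(1-\Sn[2](\tau))=\int\Phi_k\,d\Fn[2]\big/\Fn[2](\tau)$, the Kaplan--Meier analogue of $p_k=(1-S(\tau))^{-1}\int\Phi_k\,dF$. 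Here the assumption that $C$ is independent of $(T^\ast,Z)$ is exactly what makes the marginal estimator $\Gn[2]$ the correct nuisance object, and $\tau<\tauH$ keeps $1-H(\tau-)$ and $1-G(\tau)$ bounded away from $0$ on $[0,\tau]$.

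The first real step is the central limit theorem for $\hat p_k$. With $\theta=\Esp[\Phi_k(T^\ast,Z)]$, $s=1-S(\tau)$ and $\tilde\Phi_k(t,z)=\Phi_k(t,z)-p_k\indicator{t\le\tau}$ one has the exact identity $\hat p_k-p_k=\Fn[2](\tau)^{-1}\int\tilde\Phi_k\,d\Fn[2]$, where $\int\tilde\Phi_k\,dF=\theta-p_k s=0$ and $\Fn[2](\tau)\xrightarrow{P}s>0$. Applying Stute's i.i.d.\ representation of Kaplan--Meier integrals in the presence of covariates (valid since $\tilde\Phi_k$ is bounded and supported on $[0,\tau]$ with $\tau<\tauH$, so the usual integrability requirement is trivially met) gives $\int\tilde\Phi_k\,d\Fn[2]=n_2^{-1}\sum_{i\in\I_2}\psi(O_i)+o_P(n_2^{-1/2})$ with $\Esp[\psi]=0$, where $\psi$ has the familiar ``oracle IPCW term plus censoring--martingale correction'' form --- roughly $\psi(T,\delta,Z)=\delta\,\tilde\Phi_k(T,Z)/(1-G(T-))+\int_0^\tau\tilde a(v)(1-H(v))^{-1}\,dM^c(v)$, with $\tilde a(v)=\Esp[\indicator{v\le T^\ast\le\tau}(\Phi_k(T^\ast,Z)-p_k)]$ and $M^c$ the censoring martingale of a single observation. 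Slutsky's lemma then yields $\sqrt{n_2}(\hat p_k-p_k)\xrightarrow{d}\N(0,\sigma^2(\Phi_k))$ with $\sigma^2(\Phi_k)=s^{-2}\Var(\psi)>0$. If one prefers not to invoke Stute, the same thing follows by peeling off the oracle estimator with the true $G$, linearising $\Gn[2]-G$ via the standard counting-process expansion of the Kaplan--Meier estimator, and applying Fubini to the resulting double sum.

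Next I would show $\hat\sigma^2(\Phi_k)\xrightarrow{P}\sigma^2(\Phi_k)$. The quantity in brackets in the definition of $\hat\sigma^2(\Phi_k)$ vanishes off $\{\delta_i=1,\ T_i\le\tau\}$, and on that event it is an empirical plug-in of $\psi(O_i)$ up to the factor $1-\Sn[2](\tau)$: the first summand $\Phi_k(T_i,Z_i)\whi$ is the sample version of the IPCW term $\delta_i\Phi_k(T_i,Z_i)/(1-G(T_i-))$, while $\frac{\delta_i}{\hat Y_{n_2}(T_i)}n_2^{-1}\sum_{j\in\I_2}\big(\indicator{T_i\le T_j}-\Sn[2](\tau)/(1-\Sn[2](\tau))\big)\Phi_k(T_j,Z_j)\whj$ is the sample version of the martingale correction together with the $p_k$-recentring, with $\hat Y_{n_2}(T_i)$ estimating $\Prob(T\ge T_i)=1-H(T_i-)$, $n_2^{-1}\sum_j\indicator{T_i\le T_j}\Phi_k(T_j,Z_j)\whj$ estimating $\Esp[\indicator{T^\ast\ge T_i}\Phi_k(T^\ast,Z)]$, and $n_2^{-1}\sum_j\Phi_k(T_j,Z_j)\whj$ estimating $\theta$. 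The nontrivial point to verify is the algebraic identity that the limiting variance equals $s^{-2}$ times the expected square of this bracket restricted to $\{\delta=1,\ T^\ast\le\tau\}$; this is a martingale computation (Doob--Meyer / predictable variation for the square of the $dM^c$-integral, and conditioning for the cross term with the IPCW piece). Granted this, $\hat\sigma^2(\Phi_k)=(1-\Sn[2](\tau))^{-2}n_2^{-1}\sum_{i\in\I_2}\indicator{T_i\le\tau}(\cdot)^2$ converges to $\sigma^2(\Phi_k)$ by the law of large numbers, using the a.s.\ uniform convergence on $[0,\tau]$ of $\Sn[2]$, $\Gn[2]$ and $\hat Y_{n_2}$ to $S$, $G$ and $1-H(\cdot-)$, the bound $|\Phi_k|\le1$, and $1-H(\tau-)>0$, $1-G(\tau)>0$ to control all denominators uniformly.

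Finally the conclusions. Under $H_0$ at the least favourable value $p_k=1/2$, the two previous steps and Slutsky give $T_{n_2}=\sqrt{n_2/\hat\sigma^2(\Phi_k)}\,(\hat p_k-1/2)\xrightarrow{d}\N(0,1)$, whence $\lim_{n_2\to\infty}\Prob_{p_k=1/2}\big(T_{n_2}>q_{1-\alpha}^{\N(0,1)}\big)=\alpha$, which in particular gives the stated inequality; for $p_k<1/2$ one has $T_{n_2}\to-\infty$ in probability, so the rejection probability tends to $0$ and the test is asymptotically of level $\alpha$ over all of $H_0$ (the continuity assumed for the law of $\Delta_k(Z,T^\ast)$ is only used to make $H_0:m_k\le0$ equivalent to $H_0:p_k\le1/2$). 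For the confidence interval, the same CLT with centring at the true $p_k$ gives $\sqrt{n_2}(\hat p_k-p_k)/\sqrt{\hat\sigma^2(\Phi_k)}\xrightarrow{d}\N(0,1)$, and inverting this pivot produces exactly the interval of~\eqref{eq::pkinterval} with asymptotic coverage $1-\alpha$. I expect the main obstacle to be the third step: identifying the exact influence function once $G$ and $S$ are replaced by their Kaplan--Meier estimators, proving the martingale identity that matches the limiting variance to the displayed estimator, and carrying through the uniform-consistency bookkeeping for the plug-ins on $[0,\tau]$.
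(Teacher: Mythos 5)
Your proposal is correct and follows the same overall strategy as the paper's proof (given as Proposition~\ref{prop::CLT}): realize the statistic as a ratio of Kaplan--Meier integrals, obtain a CLT from the i.i.d.\ (Stute/Lopez) representation of such integrals, establish consistency of $\hat{\sigma}^2(\Phi_k)$, and conclude by Slutsky. Your handling of the ratio, however, is genuinely different and arguably cleaner. The paper expands $\sqrt{n_2}(\hat{\theta}_k-\theta_k)$ into three terms $A+B+C$ (numerator fluctuation, denominator fluctuation via the martingale decomposition of $\Sn[2](\tau)-S(\tau)$, and a negligible cross term) and must then compute the covariance between $A$ and $B$; that covariance computation, culminating in the martingale identity $\Esp\big[\gamma_1(\Phi_k;T,\delta)\intT dM^{F}(u)/(1-H(u))\big]=0$, is the longest part of the argument. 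Your exact identity $\hat{p}_k-p_k=\Fn[2](\tau)^{-1}\intT\intZ\tilde{\Phi}_k\,d\Fn[2]$ with $\tilde{\Phi}_k=\Phi_k-p_k\indicator{t\leq\tau}$, which has mean zero under $F$, collapses everything into a single Kaplan--Meier integral of a centered function, so one application of the i.i.d.\ representation delivers the influence function and the asymptotic variance can be read off the variance formula for a single Kaplan--Meier integral (Proposition 2.3.1 of Lopez), with no separate covariance computation. The two influence functions do coincide, because the Stute representation of $\Fn[2](\tau)-F(\tau)$ agrees almost surely with the martingale representation $S(\tau)\intT dM_+^{F}(u)/(1-H(u))$ used by the paper. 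The one substantive step you defer --- verifying that $\Var(\psi(\tilde{\Phi}_k))/(1-S(\tau))^2$ matches the displayed $\hat{\sigma}^2(\Phi_k)$ and proving its consistency --- is exactly where the paper spends most of its effort, but the tools you name (predictable variation, conditioning on $T^*$ and on $C$, uniform consistency of $\Sn[2]$, $\Gn[2]$ and $\hat{Y}_{n_2}$ on $[0,\tau]$ with denominators bounded away from zero since $\tau<\tauH$) are the ones the paper actually uses, and your recentering would in fact shorten that verification. Your treatment of the composite null (equality in the limit at $p_k=1/2$, rejection probability tending to zero for $p_k<1/2$) and the inversion of the pivot to obtain~\eqref{eq::pkinterval} correctly supply the final step, which the paper leaves implicit when passing from Proposition~\ref{prop::CLT} to Theorem~\ref{thm::vimpG}.
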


\begin{rk}
To simplify the proof, Theorem~\ref{thm::vimpG} is presented with the censoring estimator being calculated on $\Dn[2]$. We were not able to prove that the results would still hold if the censoring estimator was calculated from all the data but our simulations suggest that this would be the case.
\end{rk}

\begin{rk} \label{rk::pkjitt}
    In some cases, the inclusion or exclusion of the $k$th variable in the learning model may not affect the predictions. This may occur for models that are independent of covariates such as the Kaplan-Meier model. 
    When this happens, $\Delta_k(Z,T^*)$ equals $0$ and $p_k$ is equal to $1$. The test for global variable importance is not suited to these degenerated cases.
\end{rk}


\section{Simulations} \label{sec::simulations}

This section illustrates all the methods and results described in Sections~\ref{sec::mse},~\ref{sec::predintervals} and~\ref{sec::varimportance} through simulated experiments. We focus on cases where the censoring model is correctly specified, to align with our assumptions on the estimator of the censoring survival function. Additional simulations, where the model for the censoring survival function is misspecified, are available in the Supplementary Material. In Section \ref{sec::realdata}, we also present an application to the German Breast Cancer Study Group (GBCSG). In both the simulation and real data sections, the following learning algorithms are implemented and then evaluated using our methods.
\begin{description}
    \item[Integrated Kaplan-Meier:] A Kaplan-Meier estimator is fitted to $\Dn$ to estimate the survival function. By integrating this curve on the interval $[0,\tau]$, we obtain an estimation of the RMST that is identical for all individuals in the test sample $\Dm$. This represents a naive algorithm since it does not take into account the covariates.
    \item[Integrated Cox:] A Cox model \citep[see][]{cox_partial_1975} is fitted to $D_n$. Unless mentioned otherwise, no interaction between covariates is included in the model. The fitted model provides an estimation of the survival curve for each observation $O_j$ in $\Dm$, adjusted with respect to the covariate $Z_j$. The estimation of the RMST is obtained by integrating this curve with respect to time on the interval $[0,\tau]$.
    \item[Integrated RSF:] The procedure is identical to the one described above with the difference that an RSF \citep[see][]{ishwaran_random_2008} is fitted to $\Dn$ instead of a Cox model. 
    \item[Pseudo-observations and linear model:] We transform the censored data in $\Dn$ into pseudo-observations \citep[see][]{andersen_regression_2004}. We use the linear model as the link function to obtain a linear model for the RMST. Unless mentioned otherwise, no interaction term is included in the linear model.
\end{description}
We stress that, in the simulations, the RMST estimations obtained with those algorithms will never be truncated even if they exceed the interval $[0,\tau]$. This avoids creating ties in the residuals as mentioned in Remark~\ref{rk::discontinuousres}. Nevertheless, it is possible in practice to truncate both predictions and prediction intervals to avoid exceeding the interval $[0,\tau]$, in which case the coverage of the intervals will remain valid. We consider three different simulation schemes.
\begin{description}
    \item[Scheme A:] Following the setting from~\citet{wang_modeling_2018}, the event times are simulated according to the following linear model:
    $$
        T_i^* = \tilde{\beta}_0^{\top} Z_i  + \varepsilon_i,
    $$
    where $\tilde{\beta}_0 = (5.5,2.5,2.5)^{\top}$, the covariates are denoted $Z_i = (1,Z_i^1,Z_i^2)^{\top}$ with $Z_i^1,Z_i^2 \sim \mathcal{B}(0.5)$ and $\varepsilon_i \sim U[-3,3]$ is a random noise. From this model we obtain the following closed form for the RMST:
    \begin{equation} \label{eq::closeRMST}
        \muT^*(Z) = \Esp[T^* \wedge \tau \mid Z] = \beta_{00} + \beta_{01} Z^1 (1-Z^2) + \beta_{10} Z^2 (1-Z^1) + \beta_{11} Z^1 Z^2,
    \end{equation}
    where $\beta_0 = (\beta_{00},\beta_{01},\beta_{10},\beta_{11})^{\top}$ is computed from $10$ million Monte Carlo samples. The value of $\tau$ is fixed to $8.8$ which yields $\beta_0 = (5.5,2.097,2.097,3.16)^{\top}$. 
    Next, two different types of censoring are considered. In scheme \textbf{A1}, the censoring is simulated independently from the covariates according to an exponential law with parameter $\lambda = 0.07$, leading to $42 \%$ of censored data. In scheme \textbf{A2}, the censoring is simulated from a Cox model $\lambda(t\mid Z)=\lambda_0(t)\exp(\beta_1 Z^1+\beta_2 Z^2)$ with Weibull baseline hazard $\mathcal{W}(\nu,\kappa)$ defined as
    \begin{align*}
        \lambda_0(t)=\frac{\nu}{\kappa}\left(\frac{t}{\kappa}\right)^{\nu-1}.
    \end{align*} 
    We set $\kappa = 12$, $\nu = 6$, and $\beta_1 = 2$, $\beta_2 = 1$, leading to $44 \%$ of censored data. 
    \item[Scheme B:] The event times are simulated according to a Cox model with Weibull baseline hazard $\mathcal{W}(\nu,\kappa)$ and three covariates $Z = (Z^1,Z^2,Z^3)^{\top}$, where $Z^k \sim U[-a,a]$ for $k = 1,2,3$, with Cox regression parameters $\beta = (\beta_1,\beta_2,\beta_3)^{\top}$. Note that the survival function can be expressed as
    \[
        S(t\mid Z) = \exp\left[-\left(\frac{t}{\kappa}\right)^\nu\exp(\beta^{\top} Z)\right], 
    \]
    and the RMST can be obtained from Equation~\eqref{eq::RMSTdef}.
    Parameters are set to $\kappa = 2$, $\nu = 6$, $a = 5$, $(\beta_1, \beta_2, \beta_3) = (2,1,0)$. The censoring is simulated independently according to an exponential law with parameter $\lambda = 0.3$, leading to $47 \%$ censored data. The time horizon $\tau$ is chosen as the $90$th percentile of the observed times $T$ which gives $\tau=3.6$.
    \item[Scheme C:] Similarly to scheme B, the event times are simulated according to a Cox model with Weibull baseline hazard $\mathcal{W}(\nu,\kappa)$, $\lambda(t\mid Z)=\lambda_0(t)\exp(g(Z))$, with 
    \begin{align*}
        g(Z) & =  Z^3 - 3 Z^5 + 2 Z^1 Z^{10} + 4 Z^2 Z^7 + 3 Z^4 Z^5 - 5 Z^6 Z^{10} \\
        & \quad+ 3 Z^8 Z^9 + Z^1 Z^4 - 2 Z^6 Z^9 - 4 Z^3 Z^4 - Z^7 Z^8,
    \end{align*}
    and $\kappa = 2$, $\nu = 6$. Let $Z = (Z^1,\dots,Z^{15})$, we simulate the covariates such that $Z^j \sim \mathcal B(0.4)$ for $j \in \{2,4,6,9,11,12\} $ and $Z^j \sim U[0,1], ~ j \in \{1,3,5,7,8,10,13,14,15\}$. As a result, only the first $10$ covariates are associated with the event times, but the other $5$ covariates (that are non-informative) will still be included in our regression models. The survival function is expressed as:
    \begin{align*}
        S(t\mid Z) & = \exp\left[-\left(\frac{t}{k}\right)^\nu\exp(g(Z))\right],
    \end{align*}
     and the RMST can be obtained from Equation~\eqref{eq::RMSTdef}. The censoring distribution is the same as in scheme B, leading to $47 \%$ censored data. The time horizon $\tau$ is chosen as the $90$th percentile of the observed times $T$ which gives $\tau=2.8$.
\end{description}

\subsection{Illustration of the WRSS estimator}

In this section, we want to illustrate Theorem~\ref{thm::WRSS}. It states that, if the censoring estimator is consistent, then our prediction performance criterion, called WRSS (see Equation~\eqref{eq::WRSSdef}), converges towards the MSE as the sample size goes to infinity. We recall that $\text{MSE}(\muTtilde) = \Esp\left[(T^* \wedge \tau-\muTtilde(Z))^2 \right]$ and that $\muTtilde$ is the limit of the predictor, as defined in Equation~\eqref{eq::convergence}. We also recall that $\text{MSE}(\muTtilde)$ can be decomposed into an inseparability and imprecision terms, as shown in Equation~\eqref{eq::MSEdecomposition}. We will consider the simulation schemes \textbf{A1}, \textbf{A2} and two learning models. The first one is a linear model that is directly fitted on the minimum between the true event times and the time horizon $\tau$, using the correct link function  (see Equation~\eqref{eq::closeRMST}). It is considered as the oracle model. The second one is based on pseudo-observations with linear link function. The latter is implemented without interaction terms, i.e. only the covariates $Z^1$ ,$Z^2$ are included. 

In the scenario \textbf{A1}, the censoring is independent of the covariates and we use the Kaplan-Meier estimator to estimate its cumulative distribution function which is a consistent estimator (in the weak sense). Thus, the WRSS should converge to the MSE (see Theorem~\ref{thm::WRSS}). In addition, for the oracle model, the imprecision term should vanish as the model is correctly specified and the WRSS should therefore converge to the inseparability term. On the other hand, for the model based on pseudo-observations, the imprecision term will not vanish and the WRSS should then be larger than with the oracle model. Since the RMST has an explicit form, the inseparability term can be easily computed using Monte Carlo simulations. For the imprecision term of the model based on pseudo-observations, $\muTtilde$ was approximated by a predictor $\muThat$ trained on a sample of size $20,000$ and the expectation was calculated using a million Monte Carlo simulations. In Figure~\ref{fig::WRSS_indep}, we represent the WRSS based on train and test samples of equal size $100$, $500$ and $1,000$ for those two learning algorithms. The boxplots are obtained from $1,000$ repetitions. We clearly see that the oracle estimator converges towards the inseparability term, displayed in red in the figure. On the contrary, we see that, with the estimator based on pseudo-observations, the WRSS converges towards a value greater than the inseparability term, as expected. With the pseudo-observations model, we observe that the imprecision term is relatively small compared to the inseparability term which suggests that, while the regression model is incorrect, it still provides predictions that are close to the ones obtained using the oracle model. 

\begin{figure}[!ht]
    \centering
    \includegraphics[scale=0.9]{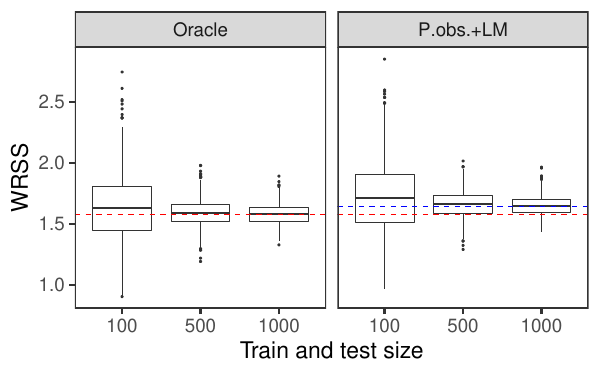}
    \caption{Distribution of $1,000$ replications of the WRSS estimator in the scenario \textbf{A1} and illustration of its convergence towards $\text{MSE}(\muTtilde)$ (see Equation~\eqref{eq::MSEdecomposition}), where $\muTtilde$ represents the limit defined in Equation~\eqref{eq::convergence}. Two learning models are compared. On the left panel, the oracle model~\eqref{eq::closeRMST} is a linear model fitted on the minimum between the true event times and $\tau$, using the correct link function. On the right panel, a linear model is implemented based on pseudo-observations, including all covariates without interaction terms. The red dotted line illustrates the inseparability term. It also represents the $\text{MSE}(\muTtilde)$ for the oracle model, whose imprecision term is null. The blue dotted line represents the $\text{MSE}(\muTtilde)$ for the model based on pseudo-observations, whose imprecision term is non-zero.}
    \label{fig::WRSS_indep}
\end{figure}

In the scenario \textbf{A2}, the censoring depends on the covariates through a Cox model. In this setting we also compare the performance of the two learning algorithms using different censoring models: a Kaplan-Meier method, a Cox model and an RSF model are implemented. We stress that the Kaplan-Meier method is no longer consistent while the Cox model is a consistent estimator for the censoring distribution. The results are illustrated in Figure~\ref{fig::WRSS_dep} where the boxplots are also obtained from $1,000$ repetitions. We clearly observe that the Kaplan-Meier method for the censoring distribution provides biased estimates of the MSE. On the other hand, the Cox and RSF models provide accurate estimations of the MSE. As in the scenario \textbf{A1}, the imprecision term is seen to vanish with the oracle model, and has a relatively small value compared to the inseparability term when using the pseudo-observations model. Those results also suggest that the RSF model for the censoring distribution is a consistent estimator (in the weak sense) since the results are almost identical to the results from the Cox model.

\begin{figure}[!ht]
    \centering
    \includegraphics[scale=0.9]{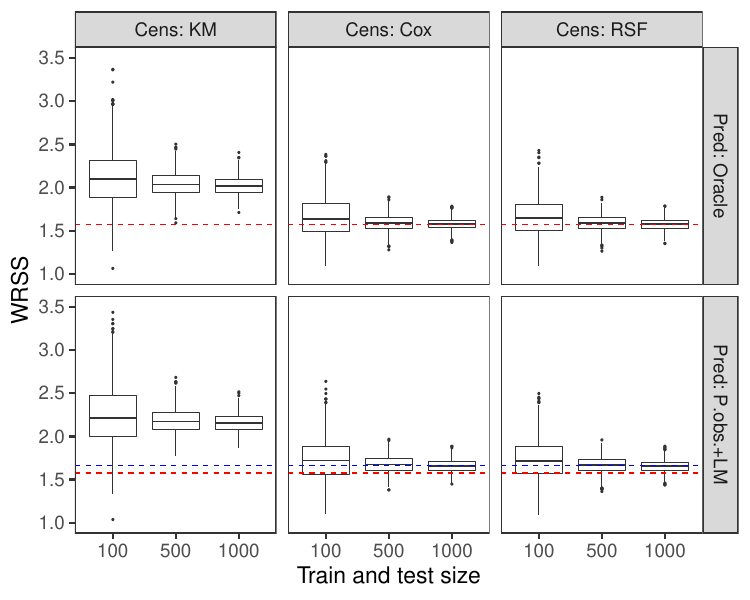}
    \caption{Distribution of $1,000$ replications of the WRSS estimator in the scenario \textbf{A2} and illustration of its convergence towards $\text{MSE}(\muTtilde)$ (see Equation~\eqref{eq::MSEdecomposition}), where $\muTtilde$ represents the limit defined in Equation~\eqref{eq::convergence}. Two learning models are compared. On the top panel, the oracle model~\eqref{eq::closeRMST} is a linear model fitted on the minimum between the true event times and $\tau$, using the correct link function. On the bottom panel, a linear model is implemented based on pseudo-observations, including all covariates without interaction terms. In addition, three censoring estimators are compared. From left to right, a Kaplan-Meier method, a Cox model and an RSF model. The red dotted line illustrates the inseparability term. It also represents the $\text{MSE}(\muTtilde)$ for the oracle model, whose imprecision term is null. The blue dotted line represents the $\text{MSE}(\muTtilde)$ for the model based on pseudo-observations, whose imprecision term is non-zero.}
    \label{fig::WRSS_dep}
\end{figure}

\subsection{Illustration of the IPCW Split Conformal algorithm}

In this section, we illustrate the construction of prediction intervals using the IPCW split conformal algorithm introduced in Section~\ref{sec::split}  (see also Algorithm~\ref{algo::IPCWsplit}). We simulate data using the simulation scheme \textbf{B} and we train all four learning models introduced at the beginning of Section~\ref{sec::simulations}. We first start by displaying  in Figure~\ref{fig::split_all} the prediction intervals at level $1-\alpha = 0.9$ on a sample of 10 individuals while the algorithms were trained on a single independent sample of size $4,000$.  As previously mentioned, intervals were not truncated. In order to have intervals comprised between $0$ and $\tau$, we could truncate both predictions and intervals, which would lead to a higher coverage (results not shown).

\begin{figure}[!ht]
    \centering
    \includegraphics[scale=0.9]{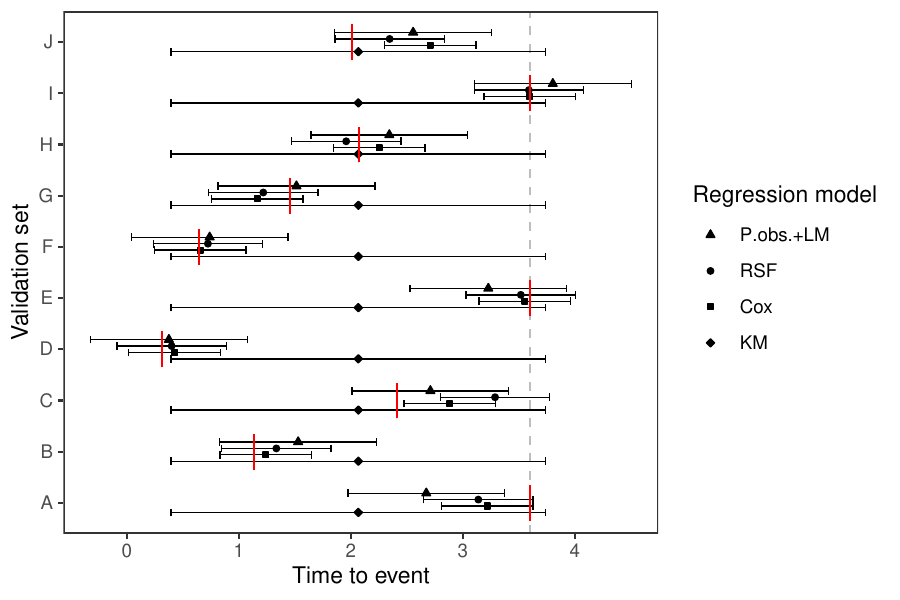}
    \caption{Prediction intervals at the $90\%$ level constructed with Algorithm \ref{algo::IPCWsplit} for four learning models: the Kaplan-Meier estimator, the Cox model, the RSF model and the linear model based on pseudo-observations. The training size is $n=4,000$ and the prediction intervals are constructed for $10$ individuals independent from the test set. All data are simulated according to the scenario \textbf{B}. The grey dotted line represents the time horizon $\tau = 3.6$. The red segments are placed at the minimum between the true event times of the test set and $\tau$.}
    \label{fig::split_all}
\end{figure}

Then, the coverage of the intervals, as claimed by Theorem~\ref{thm::conformal_pred_int}, is assessed in Figure \ref{fig::coverage_fig}, with $1-\alpha$ equal to $0.8$, $0.9$ or $0.95$. This time, the learning algorithms were trained on samples of size $n=300$, $500$ and $750$ where $n_1$ is fixed to $250$ and $n_2$ takes successively the values $50$, $250$ and $500$. The testing set, on which the empirical coverage is assessed, is of size $m=500$. The simulations were repeated $400$ times. We observe that for all learning models and confidence levels, the empirical coverage converges to $1-\alpha$, except for the integrated Kaplan-Meier algorithm which converges to a level greater than $1-\alpha$. Since this algorithm gives an RMST estimation that is identical for all individuals (as it does not take into account the covariates), the observations $T_i$ that are greater than $\tau$ will all have the same residual value. These ties make the distribution of the residuals discrete and therefore the empirical quantile of order $1-\alpha$ is such that the coverage exceeds $1-\alpha$ (see~Remark~\ref{rk::discontinuousres}).

\begin{figure}[!ht]
    \centering
    \includegraphics[scale=0.84]{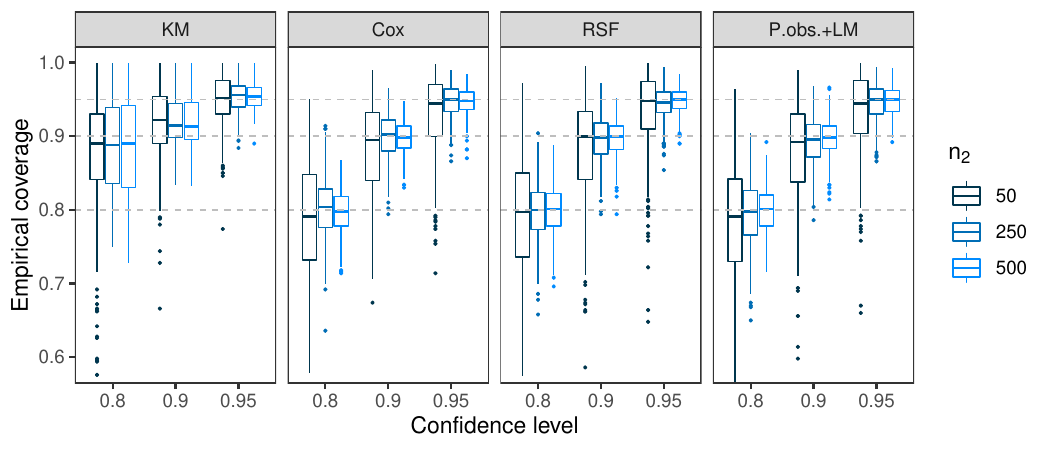}
    \caption{Empirical coverage for the prediction intervals constructed with Algorithm~\ref{algo::IPCWsplit} for four learning models: the Kaplan-Meier estimator, the Cox model, the RSF model and the linear model based on pseudo-observations. All data were simulated according to the scenario \textbf{B}.}
    \label{fig::coverage_fig}
\end{figure}

\subsection{Illustration of the LOCO variable importance measures} 

In this section, we provide illustrations of the use and performance of the LOCO variable importance measures introduced in Section~\ref{sec::varimportance}. 
We consider the simulation scheme \textbf{B}. In this scenario, three variables are considered. Only the first two are used to generate event times according to a Cox model, while the third has no impact on the outcome. For all learning algorithms (except the Kaplan-Meier model which does not take covariates into account), we want to test $H_0 : p_k \leq 1/2$ versus $H_1 : p_k > 1/2$, for $k=1, 2, 3$.
However, the value of $p_k$ for each algorithm is unknown and in particular, we do not know in advance if $p_k \leq 0.5$ ($H_0$ is true) or if $p_k > 0.5$ ($H_1$ is true). Their values are thus approximated via Monte Carlo simulations. We first simulate a training set $\Dn[1]$ of size $n_1=500$ which remains unchanged throughout the whole simulations (note that Theorem~\ref{thm::vimpG} holds for a fixed $\Dn[1]$). Next, we train the learning algorithms on this data set, simulate $10^5$ pairs $(T^*,Z)$ and compute $p_k$ from the distribution of the corresponding $\Delta_k(Z,T^*)$. Table~\ref{tab::calibrationTable} shows the resulting values, indicating that, for each model, $H_0$ is true for variable 3 while $H_1$ is true for variables 1 and 2.

\begin{table}[!ht]
    \centering
    \begin{tabular}{|l|rrr|}
            \hline
            Learning model  &  $p_1$ & $p_2$ & $p_3$ \\
            \hline
            Cox  &  0.87 & 0.79 & 0.49 \\
            Random Survival Forest  &  0.82 & 0.71 & 0.44 \\
            Pseudo-observations and linear model  &  0.84 & 0.70 & 0.46 \\
            \hline
        \end{tabular}
    \caption{Values of $p_k$, $k \in \{1,2,3\}$ for a fixed sample $\Dn[1]$, generated with $n_1=500$ according to the scenario \textbf{B}, for three learning models: the Cox model, the RSF model and the linear model based on pseudo-observations.}
    \label{tab::calibrationTable}
\end{table}

In practice, global variable importance can be determined using Theorem~\ref{thm::vimpG}. In Figure~\ref{fig::vimpGB}, using Equation~\eqref{eq::pkinterval}, the confidence intervals at the $90\%$ level for $p_k,\,k=1,2,3$ for the three learning algorithms are displayed based on $\Dn[1]$ and a data set $\Dn[2]$ of size $n_2=500$ simulated independently according to scenario \textbf{B}. We observe that the Cox model, the RSF model and the linear model based on pseudo-observations seem to all agree that only the first two covariates are important. 

\begin{figure}
    \centering
    \includegraphics[scale=0.86]{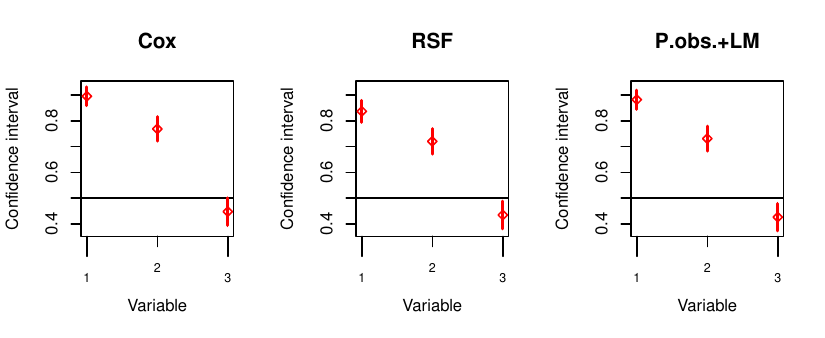}
    \caption{Confidence intervals at the $90\%$ level for $p_k,\,k=1,2,3$ (see Equation~\eqref{eq::pkinterval}), whose values are reported in Table~\ref{tab::calibrationTable}. The intervals were computed with the global variable importance measure applied to the fixed data set $\Dn[1]$ and a data set $\Dn[2]$ of size $n_2=500$ simulated independently according to scenario \textbf{B}. Three learning models are considered: the Cox model, the RSF model and the linear model based on pseudo-observations.}
    \label{fig::vimpGB}
\end{figure}

Using the same data sets $\Dn[1]$ and $\Dn[2]$, the local variable importance is illustrated for the three models and three covariates in Figure~\ref{fig::vimpLB}. 
All learning algorithms consider the first covariate to have importance in its high and low values and the third covariate to have no importance in the predictions, but they do not reach the same conclusions about the second covariate. The Cox model detects its importance for high and low values of the variable while the other two do not detect any importance of the variable. When the size of the data set increases, our simulations show that the RSF also comes to detect importance in high and low values (data not shown). The Cox and RSF models provide then very similar conclusions for all three covariates in terms of local variable importance.

\begin{figure}[!ht]
    \centering
    \includegraphics[scale=0.85]{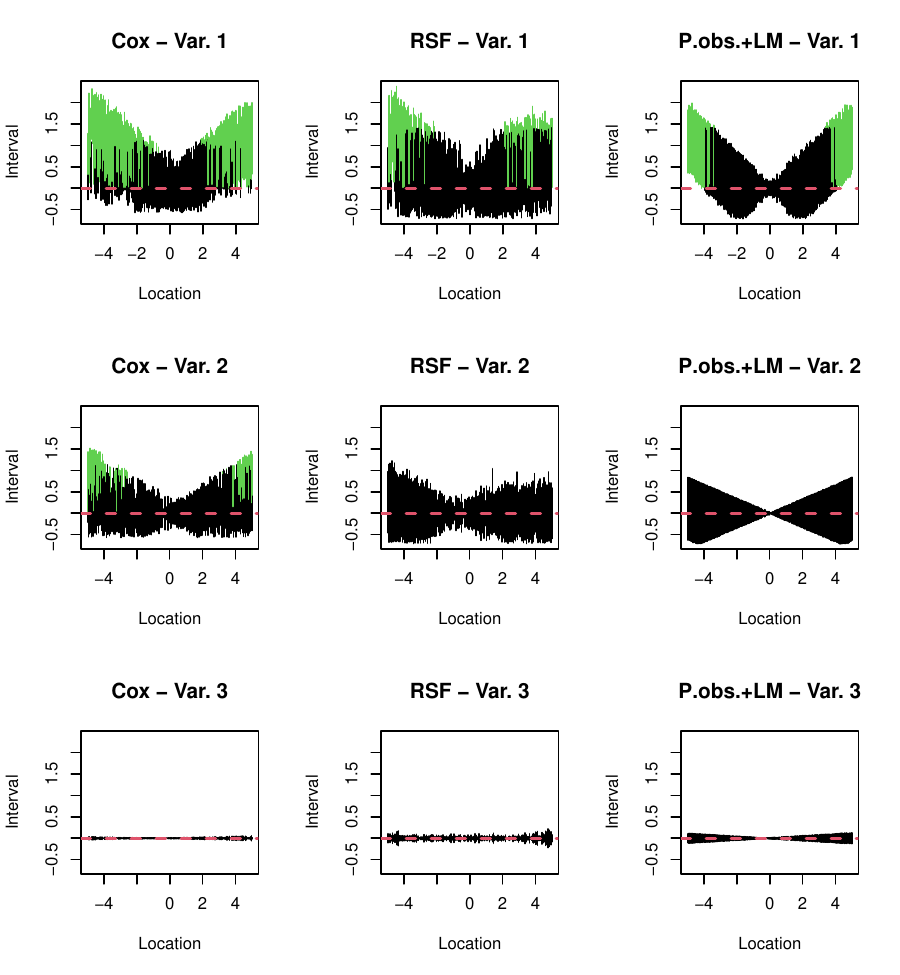}
    \caption{Local variable importance computed on the fixed data set $\Dn[1]$ and a data set $\Dn[2]$ of size $n_2=500$ simulated independently according to scenario the \textbf{B}, for three learning models: the Cox model, the RSF model and the linear model based on pseudo-observations. For the $k$th variable, the vertical segments show the intervals $W_{n,k}(Z_j)$, $j=1,\dots,m$, containing, with probability $1-\alpha=0.9$, the measure of the impact of $Z^k_j$ on predictions (see~\eqref{eq::Wnk}). Vertical segments lying above zero are colored in green.}
    \label{fig::vimpLB}
\end{figure}

Still using the same fixed data set $\Dn[1]$, we empirically assess the calibration and power of our test for global importance by simulating $1,000$ samples $\Dn[2]$ of size $n_2=500$ and by computing for each one the p-value for the statistical test.
The histograms of those p-values for each value of $k$ and all three algorithms are displayed in Figure~\ref{fig::calibrationPlot}. 
When $k=3$, we observe a skewed distribution of the p-values towards $1$ and $5\%$ rejection rates below $5\%$. This was expected since the $H_0$ hypothesis is composite and, according to Table~\ref{tab::calibrationTable}, $H_0$ is true for $k=3$ for all models. Finally, when $k=1, 2$ ($H_1$ is true) we observe that all three algorithms have a very strong power (all p-values are less than 0.01).

\begin{figure}[!ht]
    \centering

    \subfloat[$5\%$ rejection rates]{%
    \begin{tabular}{|l|rrr|}
            \hline
            Variable  & Cox & RSF & P.obs.+LM \\
            \hline
            1  & 1 & 1 & 1 \\
            2  & 1 & 1 & 1 \\
            3  & 0.027 & 0.001 & 0.004 \\
            \hline
    \end{tabular}
    \label{fig::reject_rate}
    }

    \subfloat[Distribution of the p-values]{
        \includegraphics[scale=0.9]{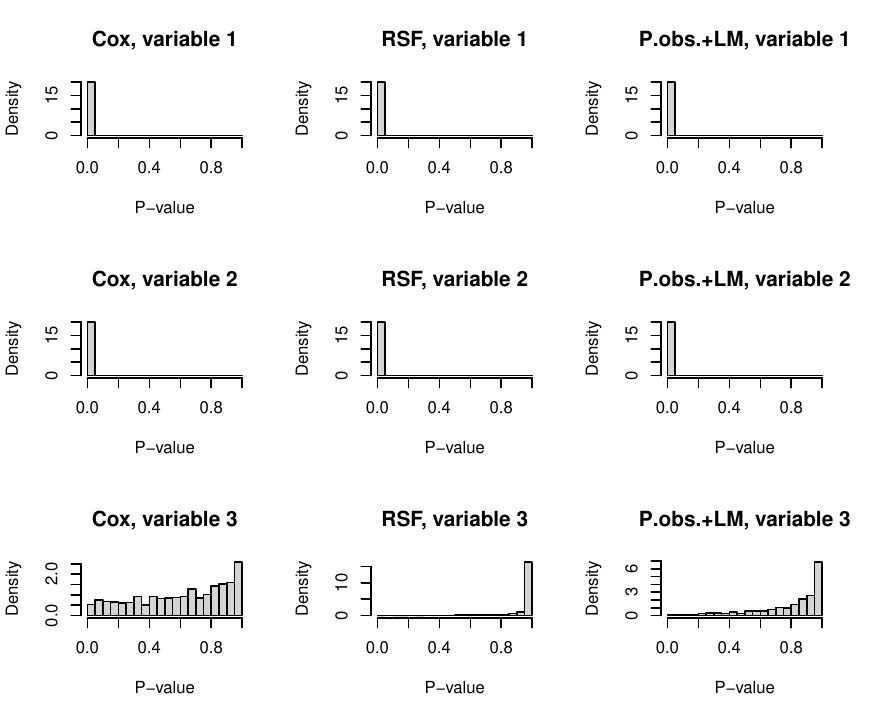} 
        \label{fig::calibration_plot}
    }

    \caption{Distribution of the p-values and $5\%$ rejection rates from $1,000$ repetitions of the LOCO global variable importance test, for three learning models: the Cox model, the RSF model and the linear model based on pseudo-observations. The sample $\Dn[1]$ was generated with $n_1=500$  and remained fixed while $\Dn[2]$ was simulated $1,000$ times with $n_2=500$ in order to obtain the distribution of the p-values. All data were simulated according to the scenario \textbf{B}. For all learning models, $H_0$ is true for variable 3 while $H_1$ is true for variables 1 and 2, see Table~\ref{tab::calibrationTable}.}
    \label{fig::calibrationPlot}
\end{figure}

\clearpage

\subsection{Multi-splitting}\label{sec::multisplitting}

We want to emphasize that variable importance results depend not only on the learning algorithm but also on the split of the data. In particular when it comes to assessing global variable importance, results may vary drastically depending on the split, for instance when the link between covariates and outcome is complex such as in the simulation scheme \textbf{C}. As an illustration, the LOCO global variable importance test is performed for all three learning models, on a data set of size $1,000$ simulated with the simulation scheme \textbf{C}. The oracle Cox model taking interactions into account is added for comparison. Figure~\ref{fig::vimpGC} presents the outputs of the procedure repeated twice, the only difference being the data split. Some variables seem to improve the accuracy of the model on one split, and on the contrary to degrade it on the other.

\begin{figure}[!ht]
    \centering
    \begin{subfigure}[b]{1\textwidth}
        \centering
        \includegraphics[width=0.9\textwidth]{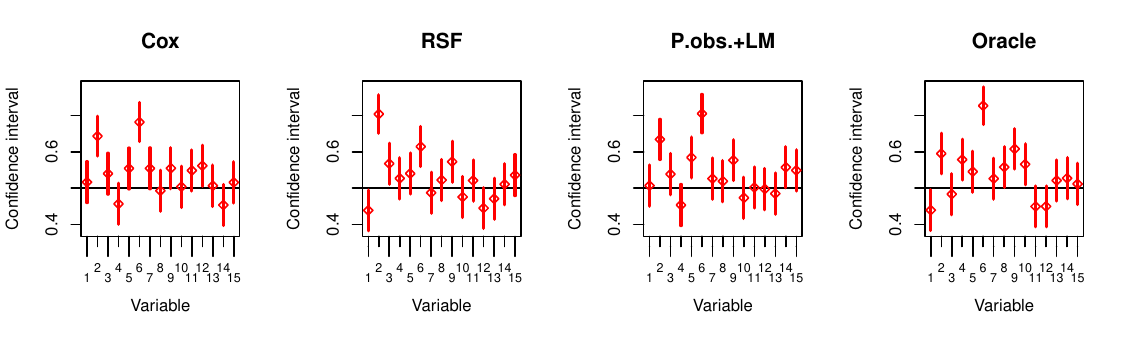}
        \caption{Split 1}    
    \end{subfigure}
    \vskip\baselineskip
    \begin{subfigure}[b]{1\textwidth}   
        \centering 
        \includegraphics[width=0.9\textwidth]{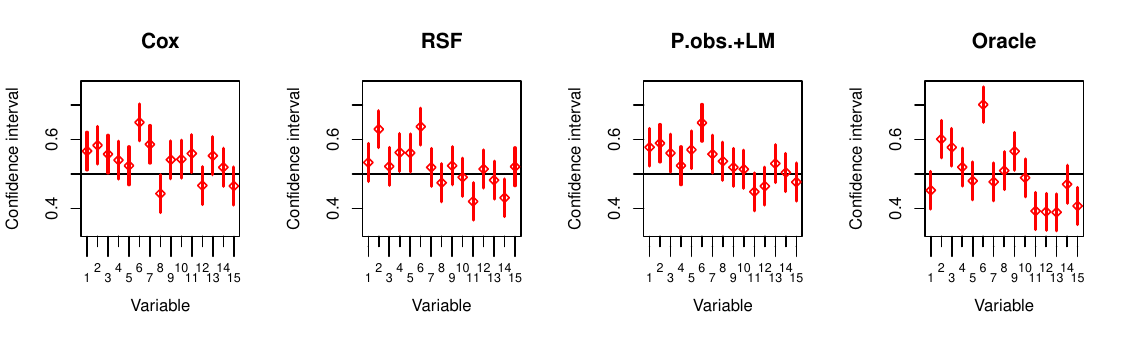}
        \caption{Split 2}     
    \end{subfigure}
    \caption{Confidence intervals at the $90\%$ level for $p_k,\,k=1,\dots,15$ (see Equation~\eqref{eq::pkinterval}), computed with the global variable importance measure on a data set of size $n=1,000$ simulated using the scenario \textbf{C}, for four learning models: the Cox model, the RSF model, the linear model based on pseudo-observations and and the oracle Cox model taking interactions into account. The procedure is conducted on two different splits of the data.} 
    \label{fig::vimpGC}
\end{figure}

To reduce the effect of the split, methods like multi-splitting described in \citet{diciccio_exact_2020} can be considered. Specifically the authors of this paper propose to choose $M$ splits of the data, and then compute a p-value for each split. Twice the resulting median or average p-value provides a valid p-value for the overall test. The overall Type 1 error can be controlled and bounded under $\alpha$, however this comes with a loss of power compared to a single split. 
We selected $M=50$ splits of our data set simulated according to simulation scheme \textbf{C}. For each split, each learning algorithm and each variable, we computed a p-value. Twice the median p-value is computed for each setting and reported in Table~\ref{tab::multisplittingC}. 
Binary variables such as $Z^2$, $Z^6$ and $Z^9$ seem to play a significant role on the predictions. There is a consensus on  variables $Z^{11}$ to $Z^{15}$, which are uninformative and indeed seem to have no impact. The effect of other variables is elusive, and can vary depending on the model.

\begin{table}[!ht]
    \centering

    \begin{tabular}{|l|ll|ll|ll|ll|}
\hline
Variable & \multicolumn{2}{l|}{P-value Cox} & \multicolumn{2}{l|}{P-value RSF} & \multicolumn{2}{l|}{P-value P.obs.+LM} & \multicolumn{2}{l|}{P-value Oracle}\\
\hline
1 & 0.62 &  & 1 &  & 0.588 &  & 1 & \\

2 & 0.007 & ** & 0 & *** & 0.012 & * & 0.001 & **\\

3 & 0.545 &  & 1 &  & 0.489 &  & 0.467 & \\

4 & 1 &  & 0.863 &  & 1 &  & 0.145 & \\

5 & 0.105 &  & 0.466 &  & 0.054 & . & 0.102 & \\

6 & 0 & *** & 0.001 & ** & 0 & *** & 0 & ***\\

7 & 0.453 &  & 0.54 &  & 0.804 &  & 0.225 & \\

8 & 0.958 &  & 1 &  & 0.677 &  & 0.576 & \\

9 & 0.087 & . & 0.05 & . & 0.067 & . & 0.046 & *\\

10 & 0.992 &  & 1 &  & 1 &  & 0.46 & \\

11 & 1 &  & 1 &  & 1 &  & 1 & \\

12 & 1 &  & 1 &  & 1 &  & 1 & \\

13 & 1 &  & 1 &  & 1 &  & 1 & \\

14 & 1 &  & 1 &  & 1 &  & 1 & \\

15 & 1 &  & 1 &  & 0.893 &  & 1 & \\
\hline
    \end{tabular}

    \smallskip
    
    Significance codes: 0 '***' 0.001 '**' 0.01 '*' 0.05 '.' 0.1 ' ' 1
    \caption{Multi-splitting on a data set of size $n=1,000$ simulated according to scenario the \textbf{C} for four learning models: the Cox model, the RSF model, the linear model based on pseudo-observations and the oracle Cox model taking interactions into account. $M=50$ splits of the data are randomly chosen. The global variable importance test is performed on each one. The resulting $M$ p-values are aggregated by twice the median value, serving as p-value of level $\alpha$ for the overall test.}
    \label{tab::multisplittingC}
\end{table}


\section{Application on real data} \label{sec::realdata}

We illustrate our comprehensive framework for evaluating RMST estimators on the classic German Breast Cancer Study Group (GBCSG) data set, which was first introduced in \citet{schumacher_randomized_1994} and is now available in the \texttt{survival} R package. The GBCSG gathered patients with node-positive breast cancer. The study was conducted from 1984 to 1989 and aimed to investigate the impact of hormone treatment on recurrence-free survival time. The event of interest is then the recurrence of a cancer, which was observed for 299 of the 686 patients. 246 patients were treated with additional hormonal therapy. Finally, prognostic factors were collected on all patients: age, menopausal status, tumor size, tumor grade, number of positive nodes, progesterone receptor and estrogen receptor. 
We estimated the censoring survival function using the Kaplan-Meier model.
We set the time horizon $\tau$ at the 90th percentile of the observed times distribution (2014 days) and were interested in the recurrence time prediction up to that time limit. 
We first investigated the possible dependence between censoring and covariates. To that aim we fitted a Cox model and an RSF on the censoring distribution. No evidence of dependence was highlighted. All p-values for the Cox model were over $0.149$, except for the treatment variable ($0.019$), but the global p-value was equal to $0.155$. As for the variable importance measure per permutation used in the RSF, it did not exceed $0.002$ for any of the variables, and $0.030$ for the treatment variable.
We tried predicting the recurrence time with all four of the learning algorithms described in Section~\ref{sec::simulations}. For each one, the WRSS is computed based on $20$-fold cross-validation in Figure~\ref{fig::mse_brcancer}. The Kaplan-Meier model is less performing than the covariate-dependent models, indicating that the chosen prognostic factors indeed play a role on the recurrence-free survival time. The RSF seems to have a slightly better performance than the Cox model and the linear model based on pseud-observations, though the variability of the results makes it difficult to identify clearly which algorithm is best suited to the data. Results of the global variable importance test using multi-splitting on $M = 40$ splits are displayed in Table~\ref{tab::multisplittingbrcancer}. Interestingly, no variables turn out to be important in the RSF predicting the recurrence time, despite its observed overall good predictive performance. Hormonal therapy, progesterone receptors, and tumor grade are important in the prediction with both the Cox model and the linear model based on pseudo-observations.

\begin{figure}[!ht]
    \centering
    \includegraphics[scale=0.73]{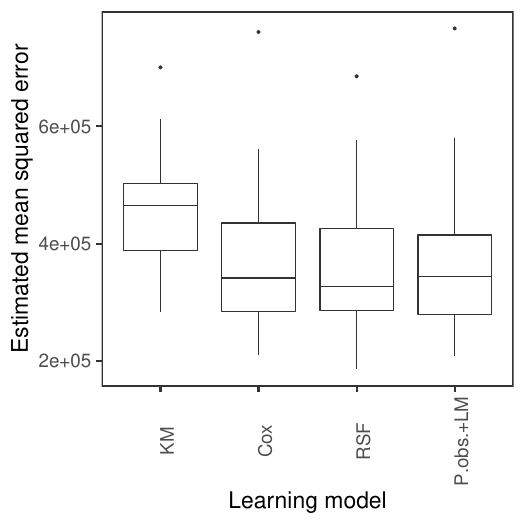}
    \caption{Estimation of the mean squared error on the GBCSG data set using the WRSS based on $20$-fold cross-validation, for four learning models: the Kaplan-Meier estimator, the Cox model, the RSF model and the linear model based on pseudo-observations.}
    \label{fig::mse_brcancer}
\end{figure}

\begin{table}[!ht]
    \centering

    \begin{tabular}{|l|ll|ll|ll|ll|}
\hline
Variable & \multicolumn{2}{l|}{P-value Cox} & \multicolumn{2}{l|}{P-value RSF} & \multicolumn{2}{l|}{P-value P.obs.+LM} \\
\hline
hormon & 0.001 & ** & 0.166 &  & 0.001 & **\\

age (years) & 1 &  & 1 &  & 1 & \\

menopausal status & 0.203 &  & 1 &  & 0.093 & .\\

tumour size (mm) & 0.989 &  & 0.498 &  & 1 & \\

number of positive nodes & 1 &  & 0.329 &  & 1 & \\

progesterone receptor (fmol) & 0 & *** & 0.359 &  & 0 & ***\\

estrogen receptor (fmol) & 1 &  & 1 &  & 1 & \\

tumour grade $\geq 2$ & 0 & *** & 0.516 &  & 0 & ***\\

\hline
\end{tabular}

    \smallskip
    
    Significance codes: 0 '***' 0.001 '**' 0.01 '*' 0.05 '.' 0.1 ' ' 1
    \caption{Multi-splitting on the GBCSG data set for three learning models: the Cox model, the RSF model and the linear model based on pseudo-observations. $M=40$ splits of the data are randomly chosen. The global variable importance test is performed on each split. The resulting $M$ p-values are aggregated by twice the median value, serving  as p-value of level $\alpha$ for the overall test.}
    \label{tab::multisplittingbrcancer}
\end{table}


\section{Conclusion}

One way of assessing the fit of a model and its predictive ability is to evaluate its MSE based on a test sample. This is a general measure that allows us to compare a wide range of models without making any specific model assumption. In the presence of right-censoring, due to tail estimation issues, it is natural to focus on the prediction of the restricted time $T^*\wedge \tau$ based on a clinically relevant fixed time horizon $\tau$ \citep[see][]{eaton_designing_2020}. Over the recent years, there has been a wide variety of new models that can handle censored data from the random survival forest method to neural networks based methods~\citep[see][]{ishwaran_random_2008,zhao_deep_2021}. Those methods make few assumptions as compared to the ubiquitous Cox model that relies on the proportional hazard assumption and it is interesting to evaluate and compare their predictive abilities. Based on the MSE it is well known that the best prediction model is the conditional expectation of the restricted time and therefore predicting the restricted time amounts to estimating the RMST.

Methods for evaluating RMST estimation models remain limited. 
In this context, we propose a complete framework for evaluating a time to event predictor using the RMST. Specifically, we propose methods to assess its predictive ability and variability, and to estimate variable importance, globally and locally. The steps of the implementation can be summarized as follows.
\begin{enumerate}
    \item Evaluate the predictive performance of the predictor quantitatively with the WRSS \eqref{eq::WRSSdef}, an estimator of the MSE based on IPCW. To reduce the variability induced by the splitting procedure, $V$-fold cross-validation is recommended. 
    \item Construct prediction intervals with the IPCW Conformal Algorithm \ref{algo::IPCWsplit} or \ref{algo::IPCWROO}, as illustrated in Figure~\ref{fig::split_all}. 
    \item For each variable of interest, evaluate its global importance in the regression model with the statistical test \eqref{eq::vimpgstatistic} and construct the corresponding confidence interval given by \eqref{eq::pkinterval}. Note that this measure is inherent in the model and in the data split and can differ from the true significance of the variable. 
    \item Compute the local variable importance when it is relevant, that is for variables with a sufficient amount of possible values. This measure is also inherent in the model and in the data split.
\end{enumerate}
In a context of high-dimensional data, one can form groups of variables that are relevant according to domain-specific knowledge. To this aim, in the definition~\eqref{eq::deltak}, the $k$ which represents the $k$th variable can equivalently represent the $k$th group of variables. Otherwise, we recommend focusing only on a subset of relevant covariates by first applying a pre-selection step on the data, typically with variable selection methods such as the Lasso or based on an AIC criterion \citep[see for instance][]{kojima_variable_2022}. These methods allow us to reduce the computational cost of our procedure.  

Under consistency assumptions defined in Section~\ref{sec::notations}, we proved that all the methods described above are asymptotically valid. 
Where standard conformal methods verify finite-sample validity, IPCW conformal algorithms are only valid asymptotically because of the estimation of the censoring survival function.
Nevertheless, as for standard conformal methods, no assumption is needed on the estimator of the RMST nor on the true data distribution.

We want to stress that the whole analysis procedure is model-agnostic, thus robust against model misspecification, but not against censoring model misspecification. 
Results from the Supplementary Material indicate that, when the censoring model is misspecified, there is a slight degradation in performance. The main impact is observed on the power of the global variable importance test, where important variables with weak effects are more difficult to detect.
However, estimating the distribution of the time to event (conditional on covariates) and estimating the distribution of the censoring (conditional on covariates) are inherently asymmetric tasks. In applications, censoring often exhibits minimal dependence on covariates, or may not depend on them at all. Consequently, estimating the censoring distribution is generally a simpler task compared to estimating the time to event distribution. It is important to note that this discussion is broad and applies to all IPCW methods. For instance, the same assumption on the censoring model is required for the standard Brier score~\citep[see][]{gerds_consistent_2006}.

Another limitation concerns the independence assumption between censoring and covariates for the global variable importance test, in Theorem~\ref{thm::vimpG}. Unfortunately, we were not able to extend this last result to a more general dependence relation. The distribution of the test statistic relies on properties of Kaplan-Meier integrals and new theoretical results would need to be derived for a test statistic that would be based on a general estimator of the censoring cumulative distribution. This is left to future research.

Split conformal prediction offers a computationally efficient approach to conducting distribution-free predictive inference in regression tasks. However, it relies on a single random split of the data, which can significantly influence the results.
Various methods exist to aggregate split conformal prediction intervals across multiple splits (cross-conformal prediction~\citep{vovk_cross-conformal_2015}, jackknife+ and K-fold CV+ prediction~\citep{barber_predictive_2021}, K-subsample conformal prediction~\citep{gupta_nested_2022} and multi-split conformal prediction based on Markov’s inequality~\citep{solari_multi_2022}).
These methods allow us to mitigate the impact of the split on the prediction intervals created with Algorithms~\ref{algo::IPCWsplit} and~\ref{algo::IPCWROO}. Developing a multi-split procedure for the global variable importance test is an avenue for future research.


\section{Technical proofs} \label{sec::proofs}

\subsection{Proofs for Section~\ref{sec::mse}}

\begin{proof}[Proof of Theorem~\ref{thm::WRSS}]
    The proof follows the outline of the proof of Theorem 5.2 in~\citet{gerds_consistent_2006}. It essentially relies on the assumptions of weak consistency of the censoring estimator defined by Equation~\eqref{eq::weakconsistency}, and on $\muThat \in \M_\A$ and its limit $\muTtilde \in \M_\A$ being bounded. 
\end{proof}


\subsection{Proofs for Section~\ref{sec::predintervals}}

Let $\{\wi, i \in \I_2\}$ be the inverse probability censoring weights computed with the unknown function $G$, i.e. for all $i$
\[
    \wi = \frac{\indicator{T_i \leq \tau} \delta_i}{1-G(T_i- \mid Z_i)} + \frac{\indicator{T_i > \tau}}{1-G(\tau \mid Z_i)}\cdot
\]

\begin{lemma} \label{lem::ipcw_cond_mean}
Let $(T,\delta,Z)$ be an observation independent of $\Dn[1]$. 
Let 
\begin{align*}
    f : \Rbb^+ \times \Rbb^d & \to \Rbb \\
    (t,z) & \mapsto f(t,z,\Dn[1]),
\end{align*}
such that $ \Esp[f(T^* \wedge \tau,Z,\Dn[1]) \mid  \Dn[1]]$ is almost surely finite. Then

\[
  \Esp[f(T \wedge \tau,Z,\Dn[1]) \w\mid \Dn[1]] = \Esp[f(T^* \wedge \tau,Z,\Dn[1]) \mid  \Dn[1]].
\]
\end{lemma}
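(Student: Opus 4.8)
The plan is to condition on $(T^*, Z)$ and use the conditional independence assumption \eqref{eq::conditionalindep} together with the definition of the censoring weights. First I would write out the left-hand side by splitting the weight $\w$ according to the two indicator terms in its definition. On the event $\{T \leq \tau\}$, the indicator $\indicator{T \leq \tau}\delta$ forces $T = T^*$ and $T^* \leq \tau$, so $f(T\wedge\tau, Z, \Dn[1]) = f(T^*\wedge\tau, Z, \Dn[1])$ there; on the event $\{T > \tau\}$ we automatically have $T^* > \tau$, so again $f(T\wedge\tau, Z, \Dn[1]) = f(\tau, Z, \Dn[1]) = f(T^*\wedge\tau, Z, \Dn[1])$. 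Hence the integrand equals $f(T^*\wedge\tau, Z, \Dn[1])$ in both cases, and the whole expression becomes
\[
\Esp\!\left[ f(T^*\wedge\tau, Z, \Dn[1])\left(\frac{\indicator{T^* \leq \tau}\indicator{C \geq T^*}}{1-G(T^*-\mid Z)} + \frac{\indicator{T^* > \tau}\indicator{C > \tau}}{1-G(\tau\mid Z)}\right)\Bigm| \Dn[1]\right].
\]

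Next I would take the conditional expectation given $(T^*, Z, \Dn[1])$ inside. Since $(T,\delta,Z)$ is independent of $\Dn[1]$, and under \eqref{eq::conditionalindep} the censoring $C$ is independent of $T^*$ given $Z$, the factor $f(T^*\wedge\tau, Z, \Dn[1])$ and the indicators involving $T^*$ can be pulled out, leaving $\Esp[\indicator{C \geq T^*}\mid T^*, Z] = 1 - G(T^*- \mid Z)$ on the first event and $\Esp[\indicator{C > \tau}\mid T^*, Z] = 1 - G(\tau\mid Z)$ on the second. These exactly cancel the denominators, so the bracketed factor has conditional expectation $\indicator{T^* \leq \tau} + \indicator{T^* > \tau} = 1$. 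Applying the tower property then gives $\Esp[f(T^*\wedge\tau, Z, \Dn[1])\mid \Dn[1]]$, which is the claimed identity. The finiteness hypothesis on $\Esp[f(T^*\wedge\tau, Z, \Dn[1])\mid\Dn[1]]$ justifies these manipulations (Fubini/Tonelli).

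The only mild subtlety, and the point I would be most careful about, is the handling of the left-continuous version $G(T^*-\mid Z)$ versus the event $\{C \geq T^*\}$: one needs $\Prob(C \geq T^* \mid T^*, Z) = 1 - G(T^*-\mid Z)$, which holds because $G$ is the cdf of $C$ given $Z$ and $C$ is conditionally independent of $T^*$. A secondary point is to check that $\delta = \indicator{T^* \leq C}$ interacts correctly with the strict/non-strict inequalities; on $\{T \leq \tau\}$ one has $\indicator{T\leq\tau}\delta = \indicator{T^*\leq\tau}\indicator{T^* \leq C}$, and on $\{T>\tau\}$ one has $\indicator{T>\tau} = \indicator{T^*>\tau}\indicator{C>\tau}$, which is where the case split is resolved. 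Everything else is routine.
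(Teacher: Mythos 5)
Your proposal is correct and follows essentially the same route as the paper's proof: split the weight into its two indicator terms, observe that on each event $T\wedge\tau$ coincides with $T^*\wedge\tau$, condition on $(T^*,Z,\Dn[1])$, and use the conditional independence $T^*\indep C\mid Z$ to cancel the denominators $1-G(T^*-\mid Z)$ and $1-G(\tau\mid Z)$. Your extra care about $\Prob(C\geq T^*\mid T^*,Z)=1-G(T^*-\mid Z)$ versus the strict-inequality version is a welcome refinement of a point the paper glosses over, but it does not change the argument.
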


\begin{proof}
We have:
\begin{align*}
    & \Esp[f(T \wedge \tau,Z,\Dn[1]) \w\mid \Dn[1]] \\
    & \quad = \Esp\left[ \indicator{T \leq \tau} \frac{f(T \wedge \tau,Z,\Dn[1]) \delta}{1-G(T- \mid Z)} + \indicator{T > \tau} \frac{f(T \wedge \tau,Z,\Dn[1])}{1-G(\tau \mid Z)} \Bigm\vert \Dn[1] \right] \\
    & \quad = \Esp\left[\indicator{T^* \leq \tau} \frac{f(T^*,Z,\Dn[1])}{1-G(T^*- \mid Z)} \Esp[\indicator{T^* < C} \mid T^*,Z,\Dn[1]] \Bigm\vert  \Dn[1] \right] \\
    & \qquad + \Esp\left[ \indicator{T^* > \tau} \frac{f(\tau,Z,\Dn[1])}{1-G(\tau \mid Z)} \Esp[\indicator{C > \tau} \mid T^*,Z,\Dn[1]] \Bigm\vert  \Dn[1] \right]\\
    & \quad = \Esp\left[\indicator{T^* \leq \tau} f(T^*,Z,\Dn[1]) + \indicator{T^* > \tau}f(\tau,Z,\Dn[1]) \mid  \Dn[1] \right] \\
    & \quad = \Esp[f(T^* \wedge \tau,Z,\Dn[1]) \mid  \Dn[1]],
\end{align*}
where in the third equality we used the independence assumption between $C$ and $T^*$ conditional on $(Z,\Dn[1])$.
\end{proof}

\begin{lemma} \label{lem::ipcw_prob_conv}
For $(T^*,Z)$ independent of $\Dn[1]$, let $f$ be a function verifying $\Esp[\lvert f(T^* \wedge \tau,Z,\Dn[1]) \rvert \mid  \Dn[1]] < \infty$ a.s. for all $\tau < \tauH$. If $\Gn$ is strongly consistent in the sense defined by \eqref{eq::strongconsistency} then
\[
  \frac{1}{n_2} \sum_{i \in \I_2} f(T_i \wedge \tau,Z_i,\Dn[1]) \whi \xrightarrow[n_2 \to \infty]{a.s.} \Esp[f(T^* \wedge \tau,Z,\Dn[1]) \mid  \Dn[1]].
\]
\end{lemma}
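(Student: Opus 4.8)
The plan is to combine a strong law of large numbers with the error control coming from the strong consistency of $\Gn$, decomposing the weighted average into a term that uses the true weights $\wi$ and a remainder that measures the discrepancy between $\whi$ and $\wi$. First I would write
\[
\frac{1}{n_2} \sum_{i \in \I_2} f(T_i \wedge \tau,Z_i,\Dn[1]) \whi
= \frac{1}{n_2} \sum_{i \in \I_2} f(T_i \wedge \tau,Z_i,\Dn[1]) \wi
+ \frac{1}{n_2} \sum_{i \in \I_2} f(T_i \wedge \tau,Z_i,\Dn[1]) (\whi - \wi).
\]
For the first term, conditional on $\Dn[1]$ the summands are i.i.d.\ (the $O_i$, $i\in\I_2$, are i.i.d.\ and independent of $\Dn[1]$), and by Lemma~\ref{lem::ipcw_cond_mean} applied to $f$ — using the integrability hypothesis $\Esp[\lvert f(T^* \wedge \tau,Z,\Dn[1]) \rvert \mid \Dn[1]] < \infty$ a.s., which transfers to $\Esp[\lvert f(T\wedge\tau,Z,\Dn[1])\rvert\,\w\mid\Dn[1]]<\infty$ a.s.\ by the same computation with $\lvert f\rvert$ in place of $f$ — the conditional expectation of each summand equals $\Esp[f(T^* \wedge \tau,Z,\Dn[1]) \mid \Dn[1]]$. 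The conditional strong law of large numbers then gives almost sure convergence of the first term to the desired limit.

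For the remainder term, I would bound it in absolute value by
\[
\frac{1}{n_2} \sum_{i \in \I_2} \lvert f(T_i \wedge \tau,Z_i,\Dn[1]) \rvert \, \lvert \whi - \wi \rvert.
\]
Writing out $\whi - \wi$ from the definitions of the weights, the difference on $\{T_i\le\tau,\delta_i=1\}$ is $\bigl(1-\Gn(T_i-\mid Z_i)\bigr)^{-1}-\bigl(1-G(T_i-\mid Z_i)\bigr)^{-1}$ and on $\{T_i>\tau\}$ it is $\bigl(1-\Gn(\tau\mid Z_i)\bigr)^{-1}-\bigl(1-G(\tau\mid Z_i)\bigr)^{-1}$; in both cases, since the relevant arguments are $\le\tau<\tauH$, the denominators $1-G(\cdot\mid\cdot)$ are bounded below by a strictly positive constant, and by strong consistency~\eqref{eq::strongconsistency} the estimated denominators $1-\Gn(\cdot\mid\cdot)$ are uniformly (over $s\le\tau$, $z\in\Rbb^d$) eventually bounded below by half that constant, a.s.\ for $n$ large. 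Hence $\sup_i \lvert \whi-\wi\rvert \le c\,\sup_{s\le\tau,z}\lvert\Gn(s\mid z)-G(s\mid z)\rvert \to 0$ a.s.\ for a deterministic constant $c$. Factoring this supremum out, the remainder is at most $c\,\sup_{s\le\tau,z}\lvert\Gn-G\rvert \cdot \tfrac{1}{n_2}\sum_{i\in\I_2}\lvert f(T_i\wedge\tau,Z_i,\Dn[1])\rvert$; the average converges a.s.\ (again by the conditional SLLN, using the integrability just established) to a finite limit, while the supremum vanishes, so the product tends to $0$ a.s.

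The main obstacle is the uniform lower bound on $1-\Gn(\cdot\mid\cdot)$ needed to control $\whi-\wi$: one must argue that, because $\tau<\tauH$ forces $\inf_{s\le\tau,z}\bigl(1-G(s\mid z)\bigr)>0$ and because~\eqref{eq::strongconsistency} gives uniform closeness of $\Gn$ to $G$, the random denominators stay bounded away from zero eventually, so that the elementary inequality $\lvert a^{-1}-b^{-1}\rvert\le \lvert a-b\rvert/(ab)$ can be applied with a deterministic control on the product $ab$ in the denominator. Everything else is a routine application of the conditional strong law of large numbers together with the identity from Lemma~\ref{lem::ipcw_cond_mean}.
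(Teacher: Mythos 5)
Your proof takes essentially the same route as the paper's: the identical decomposition into a true-weight average, handled by the conditional strong law of large numbers together with Lemma~\ref{lem::ipcw_cond_mean}, plus a remainder controlled by $\sup_{s\le\tau,z}|\Gn(s\mid z)-G(s\mid z)|$ and the fact that $1-G$ (hence eventually $1-\Gn$) is bounded away from zero on $[0,\tau]$ for $\tau<\tauH$. The one minor discrepancy is that you bound the remainder by $\sup|\Gn-G|$ times the \emph{unweighted} average $\tfrac{1}{n_2}\sum_{i\in\I_2}|f(T_i\wedge\tau,Z_i,\Dn[1])|$, whose conditional integrability does not follow from the stated hypothesis (which only controls $f$ at $T^*\wedge\tau$, equivalently $|f(T\wedge\tau,Z,\Dn[1])|\wi$); the paper avoids this by keeping the factor $\wi$ inside the bound, i.e.\ using $\epsilon^{-1}\sup|\Gn-G|\cdot\tfrac{1}{n_2}\sum_{i\in\I_2}|f(T_i\wedge\tau,Z_i,\Dn[1])|\wi$, whose conditional mean is finite by Lemma~\ref{lem::ipcw_cond_mean} applied to $|f|$ --- a one-line fix you should incorporate.
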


\begin{proof}
\begin{align*}
    \frac{1}{n_2} \sum_{i \in \I_2} f(T_i \wedge \tau,Z_i,\Dn[1]) \whi & = \frac{1}{n_2} \sum_{i \in \I_2} f(T_i \wedge \tau,Z_i,\Dn[1]) \wi \\
    & \quad + \frac{1}{n_2} \sum_{i \in \I_2} f(T_i \wedge \tau,Z_i,\Dn[1]) (\whi -  \wi) 
\end{align*}
As the $f(T_i \wedge \tau,Z_i,\Dn[1])$, $i \in \I_2$, are conditionally independent given $\Dn[1]$ with the same conditional distribution, then by the strong conditional law of large numbers~\citep[see Theorem 4.2 in][]{majerek_conditional_2005} and by Lemma~\ref{lem::ipcw_cond_mean},
\[
    \frac{1}{n_2} \sum_{i \in \I_2} f(T_i \wedge \tau,Z_i,\Dn[1]) \wi \xrightarrow[n_2 \to \infty]{a.s.} \Esp[f(T^* \wedge \tau,Z,\Dn[1]) \mid  \Dn[1]].
\]
On the other hand, as that there exists $\epsilon > 0$ such that $1-\Gn(\tau \mid Z_i) > \epsilon$ for $n$ high enough and $\tau < \tauH$,

\begin{align*}
    & \bigg\lvert \frac{1}{n_2} \sum_{i \in \I_2} f(T_i \wedge \tau,Z_i,\Dn[1]) (\whi -  \wi) \bigg\rvert \\
    & \quad = \bigg\lvert \frac{1}{n_2} \sum_{i \in \I_2} f(T_i \wedge \tau,Z_i,\Dn[1]) \indicator{T_i \leq \tau} \delta_i \frac{G(T_i- \mid Z_i) - \Gn(T_i- \mid Z_i)}{(1-\Gn(T_i- \mid Z_i))(1-G(T_i- \mid Z_i))} \\
    & \qquad + \frac{1}{n_2} \sum_{i \in \I_2} f(T_i \wedge \tau,Z_i,\Dn[1]) \indicator{T_i > \tau} \frac{G(\tau \mid Z_i) - \Gn(\tau \mid Z_i)}{(1-\Gn(\tau \mid Z_i))(1-G(\tau \mid Z_i))} \bigg\rvert \\
    & \quad \leq \epsilon^{-1} \sup_{s \leq \tau, z \in \Rbb^d} \big| \Gn(s \mid z) - G(s \mid z) \big| \frac{1}{n_2} \sum_{i \in \I_2} \lvert f(T_i \wedge \tau,Z_i,\Dn[1]) \rvert \wi \\
    & \quad \xrightarrow[n_2 \to \infty]{a.s.} 0,
\end{align*}
where the convergence to zero follows from the strong consistency of the censoring estimator defined by~\eqref{eq::strongconsistency} and the law of large numbers for $ \lvert f(T_i \wedge \tau,Z_i,\Dn[1]) \rvert \wi$.
\end{proof}

\begin{proof}[Proof of Theorem~\ref{thm::conformal_pred_int}]

Applying Lemma~\ref{lem::ipcw_prob_conv} to the functions $(u,z,\Dn[1]) \mapsto 1$  and $(u,z,\Dn[1]) \mapsto \indicator{|u - \muThat[1](z)| \leq t}$ for $t \in \Rbb$ gives 
\[
  \frac{\sum_{i \in \I_2} \whi}{n_2}  \xrightarrow[n_2 \to \infty]{a.s.} 1
\]
and for all $t \in \Rbb$,
\[
\frac{1}{n_2} \sum_{i \in \I_2} \indicator{R_i \leq t} \whi \xrightarrow[n_2 \to \infty]{a.s.} \R^*_1(t):= \Prob(R^* \leq t \mid \Dn[1]).
\]

Then for all $t \in \Rbb$,
\[
    \RniiG(t) = \frac{1}{\sum_{i \in \I_2} \whi} \sum_{i \in \I_2} \indicator{R_i \leq t} \whi \xrightarrow[n_2 \to \infty]{a.s.} \R^*_1(t).
\]

For all $\alpha \in (0,1)$, let $\qhat^{\alpha} = \inf\{t, \RniiG(t) \geq 1-\alpha\}$ and $q^{\alpha} = \inf\{t, \R^*_1(t) \geq 1-\alpha\}$ be the $1-\alpha$ quantiles of the cumulative distribution functions $\RniiG$ and $\R^*_1$ respectively. 
Then from Lemma 21.2 in~\citet{van_der_vaart_asymptotic_1998} we have that for all $\alpha \in (0,1)$ 
\[
    \qhat^{\alpha} \xrightarrow[n_2 \to \infty]{a.s.} q^{\alpha}.
\]

Finally, applying the continuous mapping theorem to the function $\R^*_1$ gives
\[ \R^*_1(\qhat^{\alpha}) \xrightarrow[n_2 \to \infty]{a.s.}  \R^*_1(q^{\alpha}) \]
i.e.
\[\Prob(R^* \leq \qhat^{\alpha} \mid \Dn[1]) = \R^*_1(\qhat^{\alpha}) \xrightarrow[n_2 \to \infty]{a.s.}  \R^*_1(q^{\alpha}) = \Prob(R^* \leq q^{\alpha} \mid \Dn[1]) \geq 1 - \alpha. \]
In particular, if the residuals have a continuous distribution given $\Dn[1]$, then  $\Prob(R^* \leq q^{\alpha} \mid \Dn[1]) = 1-\alpha$.

\end{proof}


\subsection{Proofs for Section~\ref{sec::varimportance}}

We provide here a slightly more general result than the one given by Theorem~\ref{thm::vimpG}, where the function $\Phi_k$ can be any bounded function with support on $[0,\tau]$. For clarity, we state the theorem in its general form below.

\begin{prop} \label{prop::CLT}
    Let $\tau < \tauH$ and $\Phi_k(t,z): (t,z) \in \Rbb^+ \times \Rbb^d \to \Rbb^+$ be a uniformly bounded function with support $[0,\tau]$. Let 
    \[
        \theta_k  = \frac{1}{1-S(\tau)} \intT \intZ \Phi_k(u,z) dF(u,z).
    \] 
    Suppose that $C\indep (T^*,Z)$.
    Let $\Sn[2]=1-\Fn[2]$, and $1-\Gn[2]$ denote the Kaplan-Meier estimators of the functions $S=1-F$ and $1-G$ computed on $\Dn[2]$. The censoring weights $\whi$, $i \in \I_2$ are defined as in Equation~\eqref{eq::weights2}. Let
    \[
        \hat{\theta}_k = \frac{1}{1-\Sn[2](\tau)}\intT \intZ \Phi_k(u,z) d\Fn[2](u,z) = \frac{1}{1-\Sn[2](\tau)}\frac{1}{n_2}\sum_{i \in \I_2} \Phi_k(T_i,Z_i) \whi
    \]
    be an estimator of $\theta_k$. Then
    \[
        \sqrt{n_2}\left(\hat{\theta}_k - \theta_k \right) \xrightarrow[n_2 \to \infty]{} \N(0,\sigma^2(\Phi_k)) \; \text{in distribution,}
    \]
    where
    \begin{align*}
        \sigma^2(\Phi_k) & = \frac{1}{(1-S(\tau))^2}  \intT \intZ  \Bigg( \frac{\Phi_k(u,z)}{1-G(u-)} - \frac{\Bar{\Phi}_k(u)}{1-H(u)} \\
        & \quad - \frac{S(\tau)}{1-S(\tau)} \frac{\intT \intZ \Phi_k(u,z) dF(u,z)}{1-H(u)}\Bigg)^2 (1-G(u-)) dF(u,z),
    \end{align*}
    with $\Bar{\Phi}_k(t) = \int_t^{\tau} \intZ \Phi_k(u,z) dF(u,z)$.
    Moreover, the estimator
    \begin{align*}
        \hat{\sigma}^2(\Phi_k) & = \frac{(1-\Sn[2](\tau))^{-2}}{n_2} \sum_{i \in \I_2} \indicator{T_i \leq \tau} \Bigg( \Phi_k(T_i,Z_i)\whi \\
        & \quad - \frac{\delta_i}{\hat{Y}_{n_2}(T_i)} \frac{1}{n_2} \sum_{j \in \I_2}  \left( \indicator{T_i \leq T_j } - \frac{\Sn[2](\tau)}{1-\Sn[2](\tau)} \right) \Phi_k(T_j,Z_j) \whj \Bigg)^2,
    \end{align*}
    where $\hat{Y}_{n_2}(t) = \frac{1}{n_2} \sum_{i \in \I_2} \indicator{T_i \geq t}$, is consistent for the variance $\sigma^2(\Phi_k)$.
\end{prop}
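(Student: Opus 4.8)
The plan is to express $\sqrt{n_2}(\hat\theta_k - \theta_k)$ as a Kaplan--Meier integral and then invoke the asymptotic theory for such integrals. First I would observe that, by the independence assumption $C \indep (T^*,Z)$, the weights $\whi$ are built only from the marginal Kaplan--Meier estimator $\Gn[2]$, and that the weighted empirical measure $\frac{1}{n_2}\sum_{i\in\I_2}\Phi_k(T_i,Z_i)\whi$ can be rewritten as $\intT\intZ \Phi_k(u,z)\,d\Fn[2](u,z)$, where $\Fn[2]$ is the Kaplan--Meier estimator of the sub-distribution of $T^*$ restricted to $[0,\tau]$ (this is the classical IPCW identity, and under $C\indep(T^*,Z)$ it coincides with an appropriate plug-in expression). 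Thus $\hat\theta_k$ is a smooth functional $\psi(\Fn[2],\Sn[2](\tau))$ of Kaplan--Meier-type estimators. The numerator $\intT\intZ\Phi_k\,d\Fn[2]$ is a Kaplan--Meier integral of the fixed bounded function $\Phi_k$ with support in $[0,\tau]$, for which a central limit theorem with an i.i.d.\ representation is available from the Kaplan--Meier integral literature (e.g.\ the functional delta method applied to the KM process, or the classical results of Stute on KM integrals); the denominator $1-\Sn[2](\tau)$ converges a.s.\ to $1-S(\tau)>0$ since $\tau<\tauH$.

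Next I would linearize. Writing $N_{n_2} = \intT\intZ\Phi_k\,d\Fn[2]$ and $N=\intT\intZ\Phi_k\,dF$, and $D_{n_2}=1-\Sn[2](\tau)$, $D=1-S(\tau)$, a first-order Taylor expansion gives
\[
\sqrt{n_2}\bigl(\hat\theta_k-\theta_k\bigr) = \sqrt{n_2}\Bigl(\frac{N_{n_2}}{D_{n_2}} - \frac{N}{D}\Bigr) = \frac{1}{D}\sqrt{n_2}(N_{n_2}-N) - \frac{N}{D^2}\sqrt{n_2}(D_{n_2}-D) + o_P(1).
\]
Both $\sqrt{n_2}(N_{n_2}-N)$ and $\sqrt{n_2}(D_{n_2}-D)$ admit i.i.d.\ sums-of-influence-function representations coming from the same underlying martingale/empirical-process structure of the Kaplan--Meier estimator, so their joint limit is bivariate Gaussian and the right-hand side is asymptotically an i.i.d.\ average of a single influence function $\phi(T_i,\delta_i,Z_i)$. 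Collecting terms and simplifying — using that $1-H(u)=(1-G(u-))(1-F(u-))$ under independence and that the KM influence function for an integral against $\Phi_k$ contributes the term $\Phi_k(u,z)/(1-G(u-))$ plus a compensator term involving $\bar\Phi_k(u)/(1-H(u))$, while the influence function of $\Sn[2](\tau)$ contributes the term proportional to $1/(1-H(u))$ — one recovers exactly the integrand displayed in the statement, and $\sigma^2(\Phi_k)=\Esp[\phi^2]$. Applying the Lindeberg--Lévy CLT to $\frac{1}{\sqrt{n_2}}\sum_{i\in\I_2}\phi(T_i,\delta_i,Z_i)$ (finiteness of the second moment follows from boundedness of $\Phi_k$ and $\tau<\tauH$, which keeps $1-G(u-)$ and $1-H(u)$ bounded away from $0$ on the support) yields the asserted convergence in distribution.

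For the variance estimator, the strategy is to show $\hat\sigma^2(\Phi_k)$ is the empirical plug-in of $\sigma^2(\Phi_k)$: each population quantity $S(\tau)$, $1-H(u)$, $F$ is replaced by its Kaplan--Meier / empirical counterpart ($\Sn[2](\tau)$, $\hat Y_{n_2}(T_i)$, and the weighted empirical measure), and the squared integrand becomes the squared bracketed term inside the sum. Consistency then follows from the uniform a.s.\ convergence of $\Sn[2]$ and $\Gn[2]$ on $[0,\tau]$ (Glivenko--Cantelli for Kaplan--Meier, valid since $\tau<\tauH$), the uniform convergence of $\hat Y_{n_2}$, a law of large numbers for the (bounded) summands, and Slutsky-type arguments to handle the nested sums; the indicator $\indicator{T_i\le\tau}$ together with boundedness of $\Phi_k$ guarantees all terms are bounded, so no integrability issues arise. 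The main obstacle I anticipate is the bookkeeping in the second step: deriving the precise influence function of a Kaplan--Meier integral against $\Phi_k$ and combining it correctly with the influence function of $\Sn[2](\tau)$ so that the algebra collapses to exactly the stated integrand — in particular getting the compensator term $\bar\Phi_k(u)/(1-H(u))$ and the cross term with the right sign and the right factor $S(\tau)/(1-S(\tau))$. Once that representation is pinned down, the CLT and the consistency of $\hat\sigma^2$ are routine.
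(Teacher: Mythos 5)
Your plan follows essentially the same route as the paper's proof: the paper writes $\sqrt{n_2}(\hat\theta_k-\theta_k)$ as exactly your two linearization terms plus a negligible cross term, obtains the i.i.d.\ representation of the Kaplan--Meier integral from \citet{lopez_reduction_2007} (Stute-type results) and of $\Sn[2](\tau)$ from its martingale decomposition, and then applies the CLT, just as you propose. The one substantive step you defer as ``bookkeeping'' is where most of the paper's work actually lies: the variance computation, and in particular the martingale calculation showing that the covariance between the KM-integral correction term $\gamma_1(\Phi_k;T,\delta)=\intT\Bar{\Phi}_k(y)\,dM^G(y)/(1-H(y))$ and the term $\intT dM^{F}(u)/(1-H(u))$ coming from $\Sn[2](\tau)$ vanishes (a computation that uses the fact that $N^F$ and $N^G$ cannot jump at the same time), which is precisely what makes the algebra collapse to the stated integrand.
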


\begin{proof}
For all $i \in \I_2$, we introduce the martingale residuals
\begin{align*}
    & M_i^F(t) = N_i(t) - \int_0^t \indicator{T_i\geq u} \frac{dF(u)}{1-F(u-)} \text{, with } N_i(t) = \indicator{T_i \leq t, \delta_i = 1}, \\
    & M_i^G(t) = N_i^G(t) - \int_0^t \indicator{T_i\geq u} \frac{dG(u)}{1-G(u-)} \text{, with } N_i^G(t) = \indicator{T_i \leq t, \delta_i = 0}, \\
    & M_+^{X}(t) = \frac{1}{n_2} \sum_{i \in \I_2} M_i^{X}(t) \text{ for } X = F,G.
\end{align*}
First note that the weights $\whi$, $i \in \I_2$ defined in Equation \eqref{eq::weights2} simplify into
$\whi = \delta_i/(1 - \Gn[2](T_i-))$ because $\Phi_k$ has support $[0,\tau]$. Then
\begin{align*}
    & \sqrt{n_2} (\hat{\theta}_k - \theta_k) \\
    & \quad = \sqrt{n_2}\left(\frac{\intT \intZ \Phi_k(u,z) d\Fn[2](u,z)}{1-\Sn[2](\tau)} - \frac{\intT \intZ \Phi_k(u,z) dF(u,z)}{1-S(\tau)} \right) \\
    & \quad = \frac{\sqrt{n_2}}{1-S(\tau)} \intT \intZ \Phi_k(u,z) d(\Fn[2] - F)(u,z) \\
    & \qquad + \sqrt{n_2} \frac{\Sn[2](\tau) - S(\tau)}{(1-S(\tau))(1-\Sn[2](\tau))}  \intT \intZ \Phi_k(u,z) dF(u,z) \\
    & \qquad  + \sqrt{n_2}  \frac{\Sn[2](\tau) - S(\tau)}{(1-S(\tau))(1-\Sn[2](\tau))} \intT \intZ \Phi_k(u,z) d(\Fn[2] - F)(u,z) \\
    & \quad =: A + B + C.
\end{align*}

From~\citet{lopez_reduction_2007}, $A$ is a Kaplan-Meier integral and we have 
\begin{align*}
    A & = \frac{\sqrt{n_2}}{1-S(\tau)} \frac{1}{n_2} \sum_{i \in \I_2} \left( \Phi_k(T_i, Z_i) \frac{\delta_i \indicator{T_i \leq \tau}}{1-G(T_i-)} - \intT \intZ \Phi_k(u,z) dF(u,z) + \gamma_1(\Phi_k;T_i,\delta_i) \right) \\
    & \quad +  O_{\Prob}(n_2^{-1/2}),
\end{align*}
where for all $i \in \I_2$,
\[ 
    \gamma_1(\Phi_k;T_i,\delta_i) = \intT \frac{\Bar{\Phi}_k(y) dM_i^G(y)}{1-H(y)}\cdot
\]
Moreover, using the martingale decomposition of the Kaplan-Meier estimator $\Sn[2](\tau) - S(\tau) = - S(\tau) \intT (dM_+^{F}(u))/(1-H(u)) + o_{\Prob}(n_2^{-1/2})$ \citep[see for instance][]{andersen_statistical_1993}, we can rewrite $B$ as
\[
    B = - \sqrt{n_2} \frac{S(\tau)}{(1-S(\tau))^2} \intT \frac{dM_+^{F}(u)}{1-H(u)} \intT \intZ \Phi_k(u,z) dF(u,z) + o_{\Prob}(1).
\]
Finally, the $C$ term tends to $0$ in probability as $n_2$ tends to infinity since $\sqrt{n_2} \intT \intZ \Phi_k(u,z) d(\Fn[2] - F)(u,z) = O_{\Prob}(1)$ (see Proposition 2.3.1 in \cite{lopez_reduction_2007}) and $\Sn[2](\tau) - S(\tau) = o_{\Prob}(1)$ from the consistency of the Kaplan-Meier estimator. Finally, we obtain the following centered and i.i.d  representation of $ \sqrt{n_2} (\hat{\theta}_k - \theta_k) $:
\begin{align*}
    \sqrt{n_2} (\hat{\theta}_k - \theta_k) = & \sqrt{n_2} \frac{1}{n_2} \sum_{i \in \I_2} \frac{1}{1-S(\tau)} \Bigg( \Phi_k(T_i, Z_i) \frac{\delta_i \indicator{T_i \leq \tau}}{1-G(T_i-)} \\
    & - \intT \intZ \Phi_k(u,z) dF(u,z) + \gamma_1(\Phi_k;T_i,\delta_i) \\ 
    & - \frac{S(\tau)}{1-S(\tau)} \intT \intZ \Phi_k(u,z) dF(u,z) \intT \frac{dM_i^{F}(u)}{1-H(u)}  \Bigg) + o_{\Prob}(1).
\end{align*}
The convergence in law follows from the central limit theorem. 

Next, we compute the asymptotic variance, denoted $\sigma^2(\Phi_k)$, in the following way:
\begin{align*}
    & (1-S(\tau))^2 \sigma^2(\Phi_k) \\
    & \quad = \Var\left(\Phi_k(T, Z) \frac{\delta \indicator{T \leq \tau}}{1-G(T-)} + \gamma_1(\Phi_k;T,\delta) \right) \\
    & \qquad + \Var\left( \frac{S(\tau)}{1-S(\tau)} \intT \intZ \Phi_k(u,z) dF(u,z) \intT \frac{dM^{F}(u)}{1-H(u)} \right) \\
    & \qquad + 2 ~ \Cov \Bigg(\Phi_k(T, Z) \frac{\delta \indicator{T \leq \tau}}{1-G(T-)} + \gamma_1(\Phi_k;T,\delta),  \frac{S(\tau)}{1-S(\tau)} \intT \intZ \Phi_k(u,z) dF(u,z) \intT \frac{dM^{F}(u)}{1-H(u)}\Bigg) \\
    & \quad =: D + E + 2 F.
\end{align*}
First,
\[
    D = \intT \intZ \frac{\left( \Phi_k(u,z) - \frac{\Bar{\Phi}_k(u)}{1-F(u)} \right)^2}{1-G(u-)} dF(u,z)
\]
by Proposition 2.3.1 in~\citet{lopez_reduction_2007}.
Second, using Theorem 2.4.4 in~\citet{fleming_counting_2005},
\begin{align*}
    E & = \frac{S(\tau)^2}{(1-S(\tau))^2} \left( \intT \intZ \Phi_k(u,z) dF(u,z) \right)^2 \intT \frac{dF(u)}{(1-F(u)) (1-H(u))}\cdot
\end{align*}
Third,
\begin{align*}
    F & = \frac{S(\tau)}{1-S(\tau)} \intT \intZ \Phi_k(u,z) dF(u,z) \\
    & \quad \times \Esp\left[ \left(\Phi_k(T, Z) \frac{\delta \indicator{T \leq \tau}}{1-G(T-)} + \intT \frac{\Bar{\Phi}_k(y) dM^G(y)}{1-H(y)} \right) \intT \frac{dM^{F}(u)}{1-H(u)} \right].
\end{align*}
We then study the $F$ term. From the definition of $M^{F}$ and the Fubini theorem, we have
\begin{align*}
    & \Esp\left[ \Phi_k(T, Z) \frac{\delta \indicator{T \leq \tau}}{1-G(T-)}  \intT \frac{dM^{F}(u)}{1-H(u)} \right] \\
    & \quad = \intT \intZ \frac{\Phi_k(u,z)}{1-H(u)} dF(u,z) - \intT \intZ \Phi_k(u,z) \intT \frac{\indicator{u \geq y}}{1-H(y)} \frac{dF(y)}{1-F(y)} dF(u,z) \\
    & \quad = \intT \intZ \frac{\Phi_k(u,z)}{1-H(u)} dF(u,z) - \intT \frac{\Bar{\Phi}_k(y)}{1-H(y)} \frac{dF(y)}{1-F(y)}\cdot
\end{align*}
In addition, using the fact that $N^G$ and $N^F$ cannot jump at the same time, we have
\begin{align*}
    & \Esp\left[ \intT \frac{\Bar{\Phi}_k(y) dM^G(y)}{1-H(y)}  \intT \frac{dM^{F}(u)}{1-H(u)} \right] \\
    & \quad = \Esp\bigg[ \intT \intT \frac{\Bar{\Phi}_k(y)}{1-H(y)} \frac{1}{1-H(u)} \bigg(- dN^G(y) \indicator{T \geq u} \frac{dF(u)}{1-F(u-)} \\
    & \qquad - dN^F(u) \indicator{T \geq y} \frac{dG(y)}{1-G(y-)} + \indicator{T \geq u \vee y} \frac{dF(u)}{1-F(u-)} \frac{dG(y)}{1-G(y-)} \bigg)\bigg] \\
    & \quad = - \Esp\left[ \intT \frac{\Bar{\Phi}_k(C)}{1-H(C)} \frac{(1-\delta) \indicator{C\leq\tau}}{1-H(u)} \indicator{C \geq u} \frac{dF(u)}{1-F(u-)} \right] \\
    & \qquad - \Esp\left[ \intT \frac{\Bar{\Phi}_k(y)}{1-H(y)} \frac{\delta \indicator{T^*\leq\tau}}{1-H(T^*)} \indicator{T^* \geq y} \frac{dG(y)}{1-G(y-)} \right] \\
    & \qquad + \intT \intT \frac{\Bar{\Phi}_k(y)}{1-H(y)} \frac{1-H(u \vee y)}{1-H(u)}\frac{dF(u)}{1-F(u-)} \frac{dG(y)}{1-G(y-)}\cdot
\end{align*}
We now take the conditional expectation with respect to $C$ for the first term, with respect to $T^*$ for the second term and we use the relations $\Esp[1-\delta\mid C]=1-F(C)$, $\Esp[\delta\mid T^*]=1-G(T^*-)$ in order to obtain the following result
\begin{align*}
\Esp\left[ \intT \frac{\Bar{\Phi}_k(y) dM^G(y)}{1-H(y)}  \intT \frac{dM^{F}(u)}{1-H(u)} \right] & = - \intT \intT \frac{\Bar{\Phi}_k(y)}{1-H(u)} \indicator{y \geq u} \frac{dF(u)}{1-F(u-)} \frac{dG(y)}{1-G(y-)} \\
    & \quad - \intT \intT \frac{\Bar{\Phi}_k(y)}{1-H(y)} \indicator{u \geq y} \frac{dF(u)}{1-F(u-)} \frac{dG(y)}{1-G(y-)} \\
    & \quad + \intT \intT \frac{\Bar{\Phi}_k(y)}{1-H(y)} \frac{1-H(u \vee y)}{1-H(u)}\frac{dF(u)}{1-F(u-)} \frac{dG(y)}{1-G(y-)} \\
    & = 0.
\end{align*}
This proves that $\Esp[ \gamma_1(\Phi_k;T,\delta)\intT (dM^{F}(u))/(1-H(u))]=0$. To conclude, we have shown that
\begin{align*}
    F & = \frac{S(\tau)}{1-S(\tau)} \intT \intZ \Phi_k(u,z) dF(u,z) \left( \intT \intZ \frac{\Phi_k(u,z)}{1-H(u)} dF(u,z) - \intT \frac{\Bar{\Phi}_k(y)}{1-H(y)} \frac{dF(y)}{1-F(y)} \right).
\end{align*}
Finally,
\begin{align*}
    & (1-S(\tau))^2 \sigma^2(\Phi_k) \\
    & \quad = \intT \intZ \frac{\left( \Phi_k(u,z) - \frac{\Bar{\Phi}_k(u)}{1-F(u)} \right)^2}{1-G(u-)} dF(u,z) \\
    & \qquad + \frac{S(\tau)^2}{(1-S(\tau))^2} \left( \intT \intZ \Phi_k(u,z) dF(u,z) \right)^2 \intT \frac{dF(u)}{(1-F(u)) (1-H(u))} \\
    & \qquad + \frac{2~S(\tau)}{1-S(\tau)} \intT \intZ \Phi_k(u,z) dF(u,z) \left( \intT \intZ \frac{\Phi_k(u,z)}{1-H(u)} dF(u,z) - \intT \frac{\Bar{\Phi}_k(y)}{1-H(y)} \frac{dF(y)}{1-F(y)} \right) \\
    & \quad =  \intT \intZ  \Bigg( \frac{\Phi_k(u,z)}{1-G(u-)} - \frac{\Bar{\Phi}_k(u)}{1-H(u)}  - \frac{S(\tau)}{1-S(\tau)} \frac{\intT \intZ \Phi_k(u,z) dF(u,z)}{1-H(u)}\Bigg)^2 (1-G(u-)) dF(u,z).
\end{align*}
The consistency of the variance estimator follows from the consistency of the Kaplan-Meier integrals~\citep[see][]{lopez_reduction_2007}, the consistency of $\hat S$ and the consistency of $\hat Y_{n_2}$.
\end{proof}


\printbibliography

\end{document}


\maketitle

This document brings supplementary simulation results as complements to Section 6 of the main document, for the case where the model of the censoring survival function is misspecified.
The same learning algorithms as in the main document are implemented and evaluated (integrated Kaplan-Meier, integrated Cox, integrated RSF, pseudo-observations and linear model).
We consider the two new simulations schemes below.
\begin{description}
    \item[Scheme A3:] The event times are simulated in a similar way as the scheme \textbf{A} in the main document, with a supplementary uninformative covariate. The event times are simulated according to the following linear model:
    $$
        T_i^* = \tilde{\beta}_0^{\top} Z_i  + \varepsilon_i,
    $$
    where $\tilde{\beta}_0 = (5.5,2.5,2.5,0)^{\top}$, the covariates are denoted $Z_i = (1,Z_i^1,Z_i^2,Z_i^3)^{\top}$ with $Z_i^1,Z_i^2 \sim \mathcal{B}(0.5)$, $Z_i^3 \sim U[-1,1]$ and $\varepsilon_i \sim U[-3,3]$ is a random noise. From this model we obtain the following closed form for the RMST:
    \begin{equation} \label{eq::closeRMST}
        \muT^*(Z) = \Esp[T^* \wedge \tau \mid Z] = \beta_{00} + \beta_{01} Z^1 (1-Z^2) + \beta_{10} Z^2 (1-Z^1) + \beta_{11} Z^1 Z^2,
    \end{equation}
    where $\beta_0 = (\beta_{00},\beta_{01},\beta_{10},\beta_{11})^{\top}$ is computed from $10$ million Monte-Carlo samples. The value of $\tau$ is fixed to $8$ which yields $\beta_0 = (5.48,1.77,1.77,2.5)^{\top}$. 
    The censoring is simulated from a Cox model $\lambda^C(t\mid Z)=\lambda_0^C(t)\exp(3 (Z^3)^2+2 Z^1 Z^2 + (1-Z^1)Z^3)$ 
    with Weibull baseline hazard $\mathcal{W}(\nu,\kappa)$ defined as
    \begin{align*}
        \lambda_0^C(t)=\frac{\nu}{\kappa}\left(\frac{t}{\kappa}\right)^{\nu-1}.
    \end{align*} 
    Note that the survival function of the censoring can be expressed as
    \begin{equation} \label{eq::trueG_A3}
        1-G(t\mid Z) = \exp\left[-\left(\frac{t}{\kappa}\right)^\nu\exp\left(3 (Z^3)^2+2 Z^1 Z^2 + (1-Z^1)Z^3\right)\right]. 
    \end{equation}
    We set $\kappa = 12$, $\nu = 6$, leading to $41 \%$ of censored data. 
    \item[Scheme D:] The event times are simulated according to a Cox model $\lambda(t\mid Z)=\lambda_0(t)\exp(\beta^\top Z)$ with Weibull baseline hazard $\mathcal{W}(2,2)$ and five covariates $Z = (Z^1,Z^2,Z^3,Z^4,Z^5)^{\top}$, where $Z^1,Z^5 \sim \mathcal B(0.4)$, $Z^2 \sim \mathcal N(1,2)$ and $Z^3,Z^4 \sim U[-2,2]$. The Cox regression parameters are set to $\beta = (0.7,-0.4,0.5,0,0)$, so that the variables $4$ and $5$ are uninformative. 
    Note that the survival function can be expressed as
    \[
        S(t\mid Z) = \exp\left[-\left(\frac{t}{\kappa}\right)^\nu\exp(\beta^{\top} Z)\right]. 
    \]
    The censoring is simulated according to another Cox model $\lambda^C(t\mid Z)=\lambda_0^C(t)\exp(-0.4 (Z^3)^2-0.6 Z^1 Z^2 + 0.4(1-Z^1)Z^3)$ 
    with Weibull baseline hazard $\mathcal{W}(2.5,1.5)$, leading to $50 \%$ of censored data.
    Note that the survival function of the censoring can be expressed as
    \begin{equation} \label{eq::trueG_D}
        1-G(t\mid Z) = \exp\left[-\left(\frac{t}{\kappa}\right)^\nu\exp\left(-0.4 (Z^3)^2-0.6 Z^1 Z^2 + 0.4(1-Z^1)Z^3\right)\right]. 
    \end{equation}
    The time horizon $\tau = 2.2$ corresponds to the $86$th percentile of the observed times $T$.
\end{description}


\section{Illustration of the WRSS estimator}

In Section 6.1 of the main document, we illustrated the consistency of our WRSS estimator of $\text{MSE}(\muTtilde)$ when based on well specified models of the censoring survival function.
We recall that $\text{MSE}(\muTtilde) = \Esp\left[(T^* \wedge \tau-\muTtilde(Z))^2 \right]$ and that $\muTtilde$ is the limit of the predictor, as defined in Equation~(3) of the main document. We also recall that $\text{MSE}(\muTtilde)$ can be decomposed into an inseparability and imprecision terms, as shown in Equation~(9) of the main document. 
%
In order to provide supplementary results for the case where the estimator of the censoring survival function is misspecified, we consider the simulation scheme \textbf{A3} for which the censoring is simulated according to a Cox model with complex relationships between the covariates. 

Two learning models are considered for the prediction of the restricted time to event. The first one is a linear model that is directly fitted on the minimum between the true event times and the time horizon $\tau$, using the correct link function  (see Equation~\eqref{eq::closeRMST}). It is considered as the oracle prediction model. The second one is based on pseudo-observations with linear link function. The latter is implemented without interaction terms, i.e. only the covariates $Z^1$ ,$Z^2$, $Z^3$ are included. 
For the oracle model, the imprecision term of the MSE should vanish as the prediction model is correctly specified and the WRSS should converge to the inseparability term if the estimation of the censoring survival function is correctly specified. On the other hand, for the model based on pseudo-observations, the imprecision term will not vanish and the WRSS should then be larger than with the oracle model. Since the RMST has an explicit form, the inseparability term can be easily computed using Monte Carlo simulations. For the imprecision term of the model based on pseudo-observations, $\muTtilde$ was approximated by a predictor $\muThat$ trained on a sample of size $20,000$ and the expectation was calculated using a million Monte-Carlo simulations. 

As for the estimation of the censoring survival function $1-G$, three models are considered: a Kaplan-Meier method, a Cox model and an RSF model. All three models are misspecified. Indeed, the Kaplan-Meier model does not take covariates into account, while the Cox and RSF models are fitted by including all covariates without interaction terms. 
Thus, we have no guarantee that the WRSS should converge to $\text{MSE}(\muTtilde)$ with either of these estimators.
As a comparison, we also display the WRSS estimator computed on the (oracle) true function $1-G$ given in Equation~\eqref{eq::trueG_A3}, for which Theorem~1 of the main document guarantees the convergence towards $\text{MSE}(\muTtilde)$.

In Figure~\ref{fig::WRSS_dep_A3}, we represent the WRSS based on train and test samples of equal size $100$, $500$ and $1,000$ for the two learning algorithms. The boxplots are obtained from $1,000$ repetitions. 
We clearly observe that the Kaplan-Meier method for the censoring distribution provides biased estimates of the MSE. As for the Cox and RSF models, they show results very similar to the oracle censoring weights, but tend to slightly underestimate the MSE.

\begin{figure}[!ht]
    \centering
    \includegraphics[scale = 0.85]{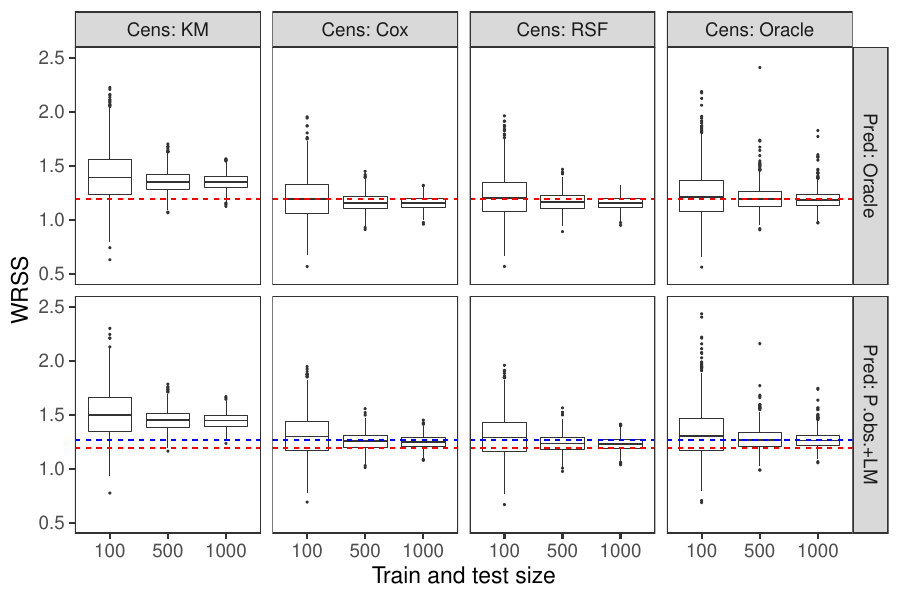}
    \caption{Distribution of $1,000$ replications of the WRSS estimator in the scenario \textbf{A3} and illustration of its convergence towards $\text{MSE}(\muTtilde)$ (see Equation~(9) in the main document), where $\muTtilde$ represents the limit defined in Equation~(3) of the main document. Two learning models are compared. On the top panel, the oracle model~\eqref{eq::closeRMST} is a linear model fitted on the minimum between the true event times and $\tau$, using the correct link function. On the bottom panel, a linear model is implemented based on pseudo-observations, including all covariates without interaction terms. In addition, three censoring estimators are compared. From left to right, a Kaplan-Meier method, a Cox model and an RSF model. The last two are fitted by including all covariates without interaction terms. They are compared to the oracle censoring weights computed with the true function $1-G$ (see Equation~\eqref{eq::trueG_A3}). The red dotted line illustrates the inseparability term. It also represents the $\text{MSE}(\muTtilde)$ for the oracle prediction model, whose imprecision term is null. The blue dotted line represents the $\text{MSE}(\muTtilde)$ for the prediction model based on pseudo-observations, whose imprecision term is non-zero.}
    \label{fig::WRSS_dep_A3}
\end{figure}


\section{Illustration of the IPCW Split Conformal algorithm}

\begin{figure}[!ht]
    \centering
    \includegraphics[scale=0.75]{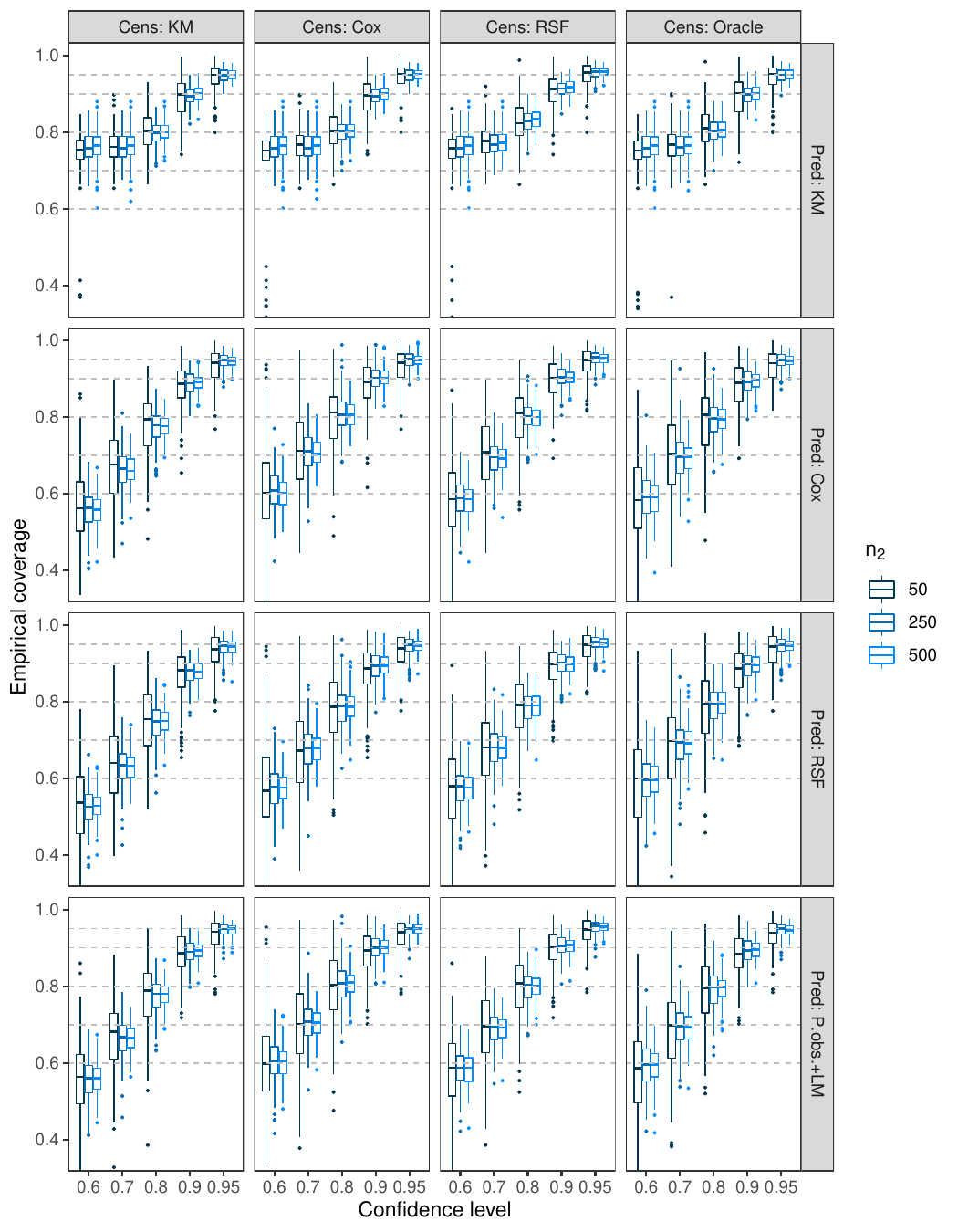}
    \caption{Empirical coverage for the prediction intervals constructed with Algorithm~1 of the main document for four learning models (the Kaplan-Meier estimator, the Cox model, the RSF model and the linear model based on pseudo-observations). Three censoring estimators are considered for the censoring weights (a Kaplan-Meier method, a Cox model and an RSF model) and compared to the oracle weights computed with the true function $1-G$ (see Equation~\eqref{eq::trueG_D}).  All data were simulated according to the scenario \textbf{D}.}
    \label{fig::coverage_fig_D}
\end{figure}

In Section 6.2 of the main document, we illustrated the validity of the coverage of our prediction intervals constructed with the IPCW split conformal algorithm (see Algorithm~1 of the main document). We displayed results for the case where the estimator of the censoring survival function is consistent. 
In order to provide supplementary results for the case where the estimator of the censoring survival function is misspecified, we consider the simulation scheme \textbf{D} for which the censoring is simulated according to a Cox model with complex relationships between the covariates. 

All four learning models introduced at the beginning of Section~6 of the main document are considered for the prediction of the restricted time to event (integrated Kaplan-Meier, integrated Cox, integrated RSF, pseudo-observations and linear model).
As for the estimation of the censoring survival function $1-G$, three models are considered: a Kaplan-Meier method, a Cox model and an RSF model. All three models are misspecified. Indeed, the Kaplan-Meier model does not take covariates into account, while the Cox and RSF models are fitted by including all covariates without interaction terms. 
Thus, we have no guarantee that the empirical coverage should converge to the required level with either of these estimators.
As a comparison, we also display the coverage for the (oracle) true function $1-G$ given in Equation~\eqref{eq::trueG_D}, for which Theorem~2 of the main document guarantees the convergence to the required level.

The coverage of the intervals is assessed in Figure 
\ref{fig::coverage_fig_D}, with $1-\alpha$ equal to $0.6$, $0.7$, $0.8$, $0.9$ or $0.95$. The learning algorithms were trained on samples of size $n=300$, $500$ and $750$ where $n_1$ is fixed to $250$ and $n_2$ takes successively the values $50$, $250$ and $500$. The testing set, on which the empirical coverage is assessed, is of size $m=500$. The simulations were repeated $200$ times.


When the time to event is predicted based on the Kaplan-Meier estimator, we observe the same phenomenon as in the experiments of the main document: for the levels $0.6$ and $0.7$, even with the oracle weights, the empirical coverage converges to a level greater than $1-\alpha$. This is due to the discrete distribution of the residuals when predicting the time to event with the Kaplan-Meier estimator (see Remark~2 of the main document).
At other levels, the empirical coverage converges to the requested value $1-\alpha$, except with the RSF estimator of the censoring survival function, where it converges to a level slightly greater than $1-\alpha$.

For the other learning models, when the censoring survival function is estimated with the Kaplan-Meier estimator, the empirical coverage is lower than the requested level, in particular for low values of $1-\alpha$. For the Cox and RSF estimators of the censoring survival function, the empirical coverage converges to the requested level $1-\alpha$ in most cases. In the case where the time to event is predicted based on the RSF, the empirical coverage converges to a level slightly lower than $1-\alpha$ for low values of $1-\alpha$.

In general, we do not achieve the expected coverage level when using the Kaplan-Meier estimator for the censoring survival function, particularly for low values of $1-\alpha$. However, when using the Cox and RSF models for the censoring distribution, despite these models being misspecified, the empirical coverage remains close to the desired level, though not perfectly optimal.



\section{Illustration of the LOCO variable importance measures}

In Section 6.3 of the main document, we provided illustrations of the performance of the LOCO variable importance measures. 
In particular, the test for global variable importance is shown to be valid under the assumption that the censoring is independent from the time to event and from the covariates (see Theorem~3 of the main document). Thus, we displayed results for cases where the censoring is independent from the covariates, in order for the Kaplan-Meier estimator of the censoring survival function to be consistent. 
In order to provide supplementary results for the case where the censoring is dependent from the covariates, we consider the simulation scheme \textbf{D} for which the censoring is simulated according to a Cox model with complex relationships between the covariates. In the following, for all considered prediction models, the censoring distribution was estimated using the Kaplan-Meier estimator, which is therefore misspecified.
In the scenario \textbf{D}, five variables are considered. Only the first three are used to generate event times according to a Cox model, while the fourth and fifth variables have no impact on the outcome. For all learning algorithms (except the Kaplan-Meier model which does not take covariates into account), we want to test $H_0 : p_k \leq 1/2$ versus $H_1 : p_k > 1/2$, for the variables $k=1, \dots, 5$. As a reminder, $p_k$ is the probability that variable $k$ improves the prediction quality.
However, for each learning algorithm, the values of the $p_k$, $k=1, \dots, 5$, are unknown. Their values are thus approximated via Monte-Carlo simulations. We first simulate a training set $\Dn[1]$ of size $n_1=500$ which remains unchanged throughout the whole simulations (note that Theorem~3 of the main document holds for a fixed $\Dn[1]$). Next, we train the learning algorithms on this data set, simulate $10^5$ pairs $(T^*,Z)$ and compute $p_k$ from the distribution of the corresponding $\Delta_k(Z,T^*)$. Table~\ref{tab::calibrationTable} shows the resulting values, indicating that, for each model, $H_0$ is true for variables 4 and 5 while $H_1$ is true for variables 1,2 and 3.

\begin{table}[!ht]
    \centering
    \begin{tabular}{|l|rrrrr|}
            \hline
            Learning model  &  $p_1$ & $p_2$ & $p_3$ & $p_4$ & $p_5$ \\
            \hline
            Cox  &  0.518 & 0.574 & 0.552 & 0.477 & 0.472 \\
            Random Survival Forest  &  0.539 & 0.587 & 0.585 & 0.472 & 0.491 \\
            Pseudo-observations and linear model  &  0.524 & 0.601 & 0.571 & 0.497 & 0.462 \\
            \hline
        \end{tabular}
    \caption{Values of $p_k$, $k = 1,\dots,5$, for a fixed sample $\Dn[1]$, generated with $n_1=500$ according to the scenario \textbf{D}, for three learning models: the Cox model, the RSF model and the linear model based on pseudo-observations.}
    \label{tab::calibrationTable}
\end{table}

Using the same fixed data set $\Dn[1]$, we empirically assess the calibration and power of our test for global importance by simulating $500$ samples $\Dn[2]$ of size $n_2=500$ and by computing for each one the p-value for the statistical test.
The histograms of those p-values for each value of $k$ and all three prediction algorithms are displayed in Figure~\ref{fig::calibrationPlotD}. 
When $k=4,5$, we observe a skewed distribution of the p-values towards $1$ and $5\%$ rejection rates below $5\%$. This was expected since the $H_0$ hypothesis is composite and, according to Table~\ref{tab::calibrationTable}, $H_0$ is true for $k=4,5$ for all models. When $k=2,3$ ($H_1$ is true) we observe that all three algorithms have a very strong power. However, for $k=1$, where $H_1$ is true for all the models, the p-values are widely spread between 0 and 1 with $5\%$ rejection rate around $6\%$ for all three algorithms. It seems that the test displays very low power when the value of $p_1$ is close to $0.5$.

\begin{figure}[!ht]
    \centering

    \subfloat[$5\%$ rejection rates]{%
    \begin{tabular}{|l|rrr|}
            \hline
            Variable  & Cox & RSF & P.obs.+LM \\
            \hline
            1  & 0.064 & 0.064 & 0.060 \\
            2  & 0.764 & 0.864 & 0.960 \\
            3  & 0.522 & 0.906 & 0.666 \\
            4  & 0.004 & 0.014 & 0.032 \\
            5  & 0.004 & 0.022 & 0.000 \\
            \hline
    \end{tabular}
    \label{fig::reject_rate}
    }

    \subfloat[Distribution of the p-values]{
        \includegraphics[scale=0.55]{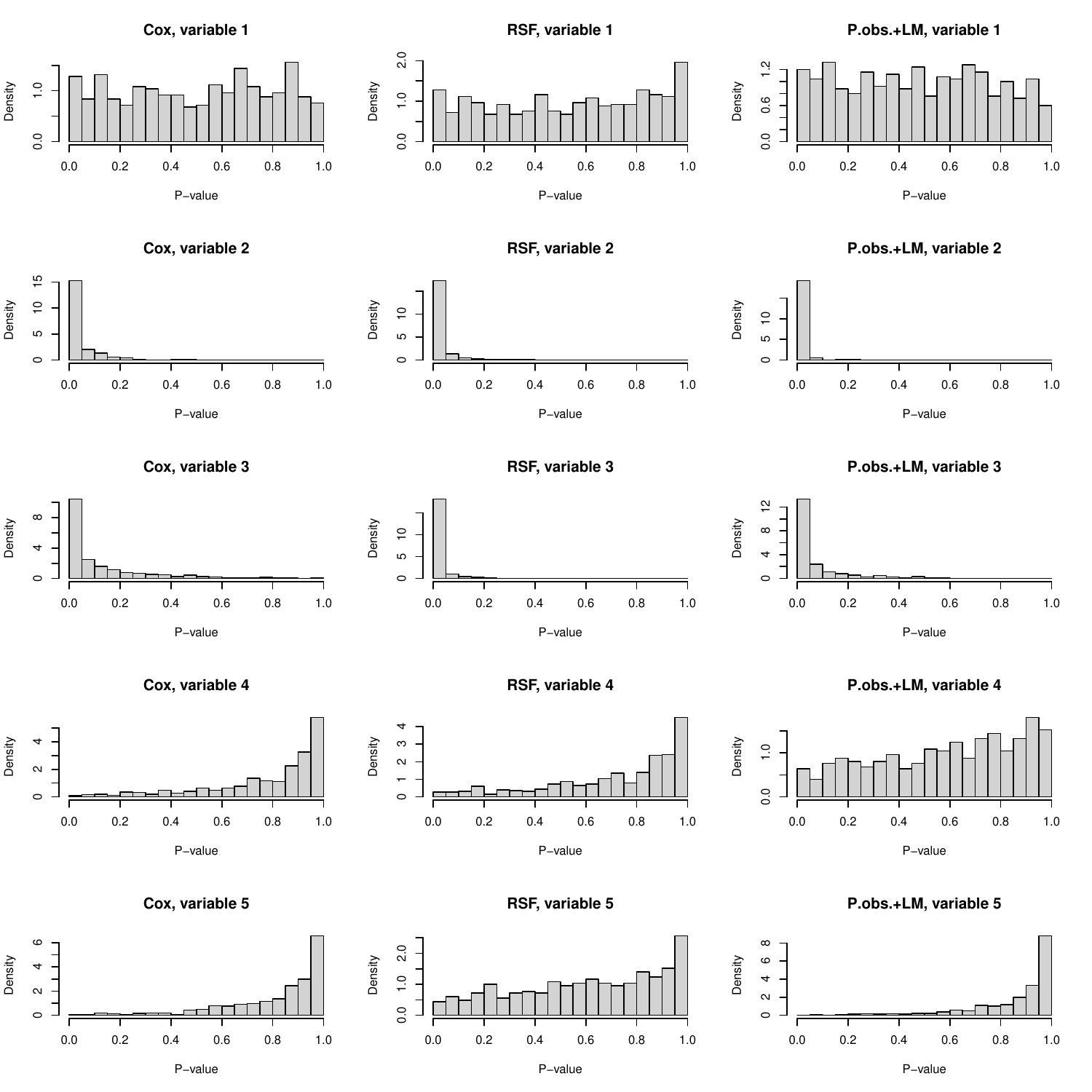} 
        \label{fig::calibration_plot}
    }

    \caption{Distribution of the p-values and $5\%$ rejection rates from $500$ repetitions of the LOCO global variable importance test, for three learning models: the Cox model, the RSF model and the linear model based on pseudo-observations. The sample $\Dn[1]$ was generated with $n_1=500$ and remained fixed while $\Dn[2]$ was simulated $500$ times with $n_2=500$ in order to obtain the distribution of the p-values. All data were simulated according to the scenario \textbf{D}. For all learning models, $H_0$ is true for variables 4 and 5 while $H_1$ is true for variables 1, 2 and 3, see Table~\ref{tab::calibrationTable}.}
    \label{fig::calibrationPlotD}
\end{figure}


\printbibliography